\documentclass[a4paper,11pt]{amsart}
\usepackage{graphicx} 

\title{A note on the modal logic of symmetric extensions}
\author{Hope Duncan}
\date{6th May 2026}
\thanks{No data are associated with this article. For the
purpose of open access, the author has applied a Creative Commons Attribution (CC-BY) licence
to any Author Accepted Manuscript version arising from this submission.}

\usepackage[utf8]{inputenc} 

\usepackage{lmodern} 
\usepackage[T1]{fontenc} 

\usepackage{amsfonts} 
\usepackage{amsmath}
\usepackage{amssymb}
\usepackage{amsthm}
\usepackage{ dsfont } 
\usepackage{tikz-cd}
\usepackage[T1]{fontenc}

\usepackage[bottom]{footmisc} 
\usepackage{graphicx} 
\usepackage{mathrsfs}

\newtheorem{theorem}{Theorem}[section]
\newtheorem{prop}[theorem]{Proposition}
\newtheorem{cor}[theorem]{Corollary}
\newtheorem{lemma}[theorem]{Lemma}

\newtheorem{question}{Question}
\newtheorem{remark}[theorem]{Remark}

\newtheorem*{main}{Main Theorem}

\theoremstyle{definition}
\newtheorem{definition}[theorem]{Definition}

\newcommand{\Q}{\mathbb{Q}} 
\newcommand{\p}{\mathbb{P}}

\newcommand{\AC}{\mathsf{AC}}
\newcommand{\ZF}{\mathsf{ZF}}
\newcommand{\ZFC}{\mathsf{ZFC}}
\newcommand{\HS}{\mathsf{HS}}

\newcommand{\SVC}{\mathsf{SVC}}
\newcommand{\CH}{\mathsf{CH}}
\newcommand{\GCH}{\mathsf{GCH}}

\newcommand{\EC}{\mathsf{EC}}

\newcommand{\Sf}{\mathsf{S}4.2}

\usepackage{hyperref} 
\hypersetup{
    colorlinks=true,
    citecolor=blue,
    linkcolor=blue}

\begin{document}

\begin{abstract}

    Taking symmetric extensions can be considered as a generalisation of forcing, which produces a richer multiverse of models with and without the axiom of choice. We can study the structure of this multiverse using modal logic. In particular, we define the concept of of choice-switches, and show any independent system of choice-switches is not itself independent from any standard example of an independent system of buttons.
\end{abstract}

\maketitle

\section{Introduction}

A standard way of generating models of $\ZF$ without choice is by taking a symmetric extension over a model of $\ZFC$. Symmetric extensions can be considered to be a generalisation of forcing, where forcing is thought of as taking a symmetric extension with names symmetric for the improper subgroup filter (which contains the trivial group). In a similar way we can think of the multiverse of symmetric extensions as a generalisation of the forcing multiverse, adding intermediate models generated by symmetric extensions which may witness failures of choice. Modal logic gives us a lens through which to study the structure of this multiverse. Modal logic adds two modal operators $\square$ and $\lozenge$, where $\square \varphi$ expresses that $\varphi$ is \emph{necessary}, and $\lozenge \varphi$ that $\varphi$ is \emph{possible}. In the context of a forcing multiverse, $\varphi$ being necessary corresponds to $\varphi$ being true in every forcing extension, and $\varphi$ being possible to $\varphi$ being true in some forcing extension. 

There are various axiom systems for modal logic we can define, the most important for us being $\Sf$ (see Definition \ref{s4.2:def}). Hamkins and Löwe showed that the modal logic of forcing is $\Sf$ \cite{Hamkins-modal-logic}, and using a similar proof Block and Löwe showed that the modal logic of symmetric extensions is also $\Sf$. A key part of the proof that the modal logic of forcing is S4.2 is the existence of an infinite independent system of buttons and switches. A \emph{button} is a statement $\varphi$ which is (necessarily) possibly necessary, and a \emph{switch} is a statement $\varphi$ such that both $\varphi$ and $\neg\varphi$ are necessarily possible. Buttons and switches are \emph{independent} if they can be pushed/switched without affecting any other button or switch. 

In this generalised context, it is natural to consider whether we now have new buttons and switches. We show there are no new buttons, but there are new switches. As being a symmetric extension (of a model of $\ZFC$) is equivalent to choice being forceable, being a symmetric extension of a model of $\ZFC$ is in some sense characterised by the axiom of choice being a switch. We will consider how generalising to symmetric extensions affects these buttons and switches. In particular, we work with \emph{choice-switches}, switches which alter which sets are well-orderable (see Definition \ref{choice-switch:def} for a precise statement). 

\begin{main}
    Any independent system of choice-switches is not independent from any known independent system of buttons, that is, buttons which assert some combinatorial property of a regular cardinal holds. 
\end{main}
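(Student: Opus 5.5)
We must show that if a statement $\sigma$ is a choice-switch (it changes which sets are well-orderable) and $\beta$ is a standard button asserting a combinatorial property of a regular cardinal $\kappa$, then pushing or switching one necessarily disturbs the other. The plan has three steps: pin down what a choice-switch does, pin down what a button does to $\kappa$, and show the two interact.

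\textbf{Step 1: extract a witness from a choice-switch.} A choice-switch $\sigma$ must move between a model where some set $X$ is well-orderable and a model where some set is not. By the characterisation recalled in the introduction, every model in the multiverse is a symmetric extension of a $\ZFC$ ground. So the transition from $\sigma$ to $\neg\sigma$ and back can be realised as follows: from a $\ZFC$ model $V$, pass to a symmetric extension $N$ in which some set $X$ is not well-orderable, then force choice back over $N$. Forcing $X$ to be well-orderable again requires a forcing that collapses or codes $|X|$ into some ordinal $\lambda$. Because $\sigma$ is a switch, we can iterate this and make the witnessing sets as large as we like, in particular larger than any fixed $\kappa$.

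\textbf{Step 2: extract the cardinal from a standard button.} Each standard button has the form ``$\kappa$ has property $P$'' for a regular cardinal $\kappa$ that is definable, such as $\omega_1^L$ being countable or $\aleph_n^L$ being collapsed. Such buttons are pushed by collapsing a cardinal. Once pushed, they are necessarily true, because collapses are preserved by further symmetric extensions.

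\textbf{Step 3: show the interaction.} I would argue that regaining choice for a large enough non-well-orderable $X$, or equivalently reaching a $\ZFC$ model from $N$, forces a collapse that pushes some unpushed standard button. The key fact is that in $N$ the failure of choice is witnessed by a set whose well-ordering requires adding surjections onto ordinals. I would first try to use the $\kappa$-closure or chain-condition arguments standard for symmetric systems. The claim to establish is that if every standard button stays unpushed, then the cardinals $\aleph_n^L$, or the relevant regular cardinals, are preserved in every extension. That would bound the possible choice-failures, so the switch could only toggle failures below a fixed size. One then shows such a bounded family cannot form an infinite independent system alongside the buttons. Alternatively, one chooses the switch instance whose well-ordering collapses the specific $\kappa$ named by the button.

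\textbf{Main obstacle.} The hardest step is Step 3: showing that every choice-switch, and not just the canonical ones such as adding Cohen reals symmetrically, must change which regular cardinals the buttons talk about. This is where ``known'' buttons must be used, since for an arbitrary button no such argument should work. I would handle it case by case through the combinatorial-property form of the buttons. For each such property, the aim is to show that a failure of choice on a set of size at least $\kappa$, followed by recovering choice, either changes $\kappa$'s cofinality or successor structure, or must collapse it.
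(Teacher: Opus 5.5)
There is a genuine gap, and it sits where you flag the main obstacle. Step 3 cannot be carried out as proposed, because the claim it aims at is false in general.

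In Cohen's first model the reals are not well-orderable. Yet choice is restored by forcing with the ccc Cohen poset itself, since the full generic extension is a forcing extension of the symmetric model, and no cardinal is collapsed. So re-well-ordering a set need not collapse anything; your ``or codes'' clause in Step 1 is exactly this escape. Likewise, preserving every regular cardinal the buttons mention puts no bound on which failures of choice can be created or repaired. No argument about an arbitrary choice-switch, or an arbitrary non-well-orderable $X$, will produce the interaction you want, and a case-by-case analysis of button properties does not touch this problem.

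The missing idea is that independence must hold \emph{necessarily}. It therefore suffices to exhibit one accessible world in which the button is still unpushed, but from which \emph{every} forcing restoring choice collapses a cardinal. The paper builds this world from Karagila's failure of the Fodor property:
\begin{enumerate}
\item Let $V_\alpha$ decide the button, and pick $\kappa>|V_\alpha|$ regular.
\item Pass to Karagila's symmetric extension $M$. In $M$ there are sets $S_n\subseteq\kappa$ ($n<\omega$), each containing a club, with $\bigcap_n S_n=\varnothing$.
\item By Lemmas \ref{4.2} and \ref{4.3}, every set of ordinals in $M$ is added by a forcing with a $\kappa$-closed dense subset. Hence $V_\alpha$ is unchanged, so the button is unpushed in $M$, and $\kappa$ is not collapsed.
\item In any $\ZFC$ extension of $M$, countably many clubs in a regular uncountable cardinal have club intersection. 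So $\kappa$ cannot remain regular there.
\item The paper concludes that any choice-restoring forcing collapses $\kappa$ and presses the button. Since the choice-switches can only be switched by restoring choice, independence fails.
\end{enumerate}

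Your alternative of choosing ``the switch instance whose well-ordering collapses $\kappa$'' points in this direction, but two things are missing. You need a mechanism like the non-countably-complete club filter, which forces the collapse in every choice-restoring extension rather than in one you pick. You also need the distributivity argument that keeps the button unpushed in the intermediate model.
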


In section 2 we will give a brief overview of how symmetric extensions are constructed, and recall some standard results about clubs. In section 3 we will review Karagila's model in which Fodor's Lemma fails, which is used to show that consistently, well-ordering the universe will collapse cardinals. In section 4 we will recall important definitions relating to the modal logic of forcing, and prove our Main Theorem, that a system of choice-switches cannot be independent from any of our current examples of buttons. Finally, we present some open questions for future inquiry, notably into models of $\ZF + \neg \SVC$. The appendix summarises work of Ryan-Smith, which can be used to construct an independent system of choice-switches.  

\section{Preliminaries}

\subsection{Symmetric systems}
In the following section, assume that $V$ is a model of $\ZF$. Let $\p$ be a forcing notion and consider an automorphism $\pi \in \text{Aut}(\p)$. We can extend the action of $\pi$ to $\p$ names with the following recursive definition, \[\pi \dot{x} = \{ \langle \pi p, \pi \dot{y}\rangle \mid \langle p, \dot{y} \rangle \in \dot{x} \}.\]
Let $\mathscr{G}$ be a group, we say that $\mathscr{F}$ is a normal filter of subgroups of $\mathscr{G}$ if it is a non-empty family of subgroups of $\mathscr{G}$ closed under finite intersections and supergroups, such that for every $\pi \in \mathscr{G}$ and $H \in \mathscr{F}$, $\pi H \pi^{-1} \in \mathscr{F}$. 

\begin{definition}
    A \emph{symmetric system} $\mathscr{S}$ is a triple $\langle \p, \mathscr{G}, \mathscr{F} \rangle$, where $\p$ is a forcing notion, $\mathscr{G} \subseteq \text{Aut}(\p)$ is a group of automorphisms, and $\mathscr{F}$ is a normal filter of subgroups on $\mathscr{G}$.
\end{definition}

Let $\langle \p, \mathscr{G}, \mathscr{F} \rangle$ be a symmetric system and let $\dot{x}$ be a $\p$-name. We say that $\dot{x}$ is $\mathscr{F}$-symmetric if sym$_\mathscr{G}(\dot{x}) = \{ \pi \in \mathscr{G} \mid \pi \dot{x} = \dot{x}\} \in \mathscr{F}$. If this property holds hereditarily, i.e. all the names which appear in $\dot{x}$ are also $\mathscr{F}$-symmetric, we say that $\dot{x}$ is hereditarily $\mathscr{F}$-symmetric. We will use $\HS_\mathscr{F}$ to denote the class of hereditarily $\mathscr{F}$-symmetric names, often omitting the subscript when this is clear from context. 

\begin{theorem}
    Suppose that $\langle \p, \mathscr{G}, \mathscr{F} \rangle$ is a symmetric system, and $G \subseteq \p$ is a $V$-generic filter. Then $V_\mathscr{S} = \{ \dot{x}^G \mid \dot{x} \in \HS\}$ is a transitive subclass of $V[G]$ which contains $V$ and satisfies the axioms of $\ZF$. 
\end{theorem}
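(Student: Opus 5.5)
We follow the standard route of Jech's treatment: first establish the basic structural properties of $V_\mathscr{S}$, then verify the $\ZF$ axioms one by one. The non-trivial axiom is Separation, and it rests on the symmetry lemma for the forcing relation.

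\emph{Structure.} Transitivity is immediate from the definition of the interpretation. If $y \in \dot{x}^G$ with $\dot{x} \in \HS$, then $y = \dot{y}^G$ for some $\langle p, \dot{y}\rangle \in \dot{x}$ with $p \in G$. Since $\dot{x}$ is hereditarily symmetric, $\dot{y} \in \HS$, so $y \in V_\mathscr{S}$. To see that $V \subseteq V_\mathscr{S}$, we show by $\in$-induction that every canonical name $\check{a}$ lies in $\HS$. Every automorphism fixes $1_\p$, so $\pi\check{a} = \check{a}$ for all $\pi \in \mathscr{G}$, giving $\mathrm{sym}(\check{a}) = \mathscr{G} \in \mathscr{F}$. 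Finally $V_\mathscr{S} \subseteq V[G]$ is trivial.

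\emph{Two lemmas.} We prove two lemmas before turning to the axioms.
\begin{enumerate}
\item For every $\pi \in \mathscr{G}$ and every name $\dot{x}$ we have $\mathrm{sym}(\pi\dot{x}) = \pi\,\mathrm{sym}(\dot{x})\,\pi^{-1}$. By normality of $\mathscr{F}$ and induction on rank, $\HS$ is therefore closed under the action of $\mathscr{G}$.
\item (Symmetry lemma.) For every $\pi \in \mathrm{Aut}(\p)$ we have $p \Vdash \varphi(\dot{x}_1,\dots,\dot{x}_n)$ iff $\pi p \Vdash \varphi(\pi\dot{x}_1,\dots,\pi\dot{x}_n)$. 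This is proved by induction on the complexity of $\varphi$, with the atomic cases handled by induction on names, using that $\pi$ is an order automorphism.
\end{enumerate}

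\emph{Easy axioms.} Extensionality and Foundation hold because $V_\mathscr{S}$ is a transitive subclass of $V[G]$. For Pairing, given $\dot{x}, \dot{y} \in \HS$, the name $\{\langle 1,\dot{x}\rangle, \langle 1,\dot{y}\rangle\}$ is stabilised by $\mathrm{sym}(\dot{x}) \cap \mathrm{sym}(\dot{y}) \in \mathscr{F}$. For Union, take $\{\langle r, \dot{z}\rangle : \exists \langle q,\dot{y}\rangle \in \dot{x},\ \langle s,\dot{z}\rangle \in \dot{y},\ r \le q, s\}$; it is stabilised by $\mathrm{sym}(\dot{x})$ and its constituents lie in $\HS$. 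Infinity holds because $\omega \in V \subseteq V_\mathscr{S}$.

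\emph{Power Set.} Given $\dot{x} \in \HS$, let $Y$ be the set of all $\dot{y} \in \HS$ with $\dot{y} \subseteq \{\langle q,\dot{z}\rangle : q \in \p,\ \dot{z} \in \mathrm{dom}(\dot{x})\}$. This is a set in $V$, and it is closed under $\mathrm{sym}(\dot{x})$ by Lemma (1). Put $\dot{P} = Y \times \{1\}$. Then $\dot{P} \in \HS$ with $\mathrm{sym}(\dot{P}) \supseteq \mathrm{sym}(\dot{x})$. Any subset of $\dot{x}^G$ in $V_\mathscr{S}$ has the form $\dot{u}^G$ with $\dot{u} \in \HS$, and it is realised by the name $\{\langle p,\dot{z}\rangle : \dot{z} \in \mathrm{dom}(\dot{x}),\ p \Vdash \dot{z} \in \dot{u}\}$. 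This name is stabilised by $\mathrm{sym}(\dot{x}) \cap \mathrm{sym}(\dot{u})$ by Lemma (2), so it lies in $Y$.

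\emph{Replacement (Collection).} For each $\dot{a} \in \mathrm{dom}(\dot{x})$, reflection in $V$ yields an ordinal $\alpha$ such that witnesses can be found among names in $\HS \cap V_\alpha$. Then $(\HS \cap V_\alpha) \times \{1\}$ is a name fixed by every $\pi \in \mathscr{G}$, since $\pi$ preserves rank and preserves $\HS$.

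\emph{Separation, the main obstacle.} Separation cannot be inherited from $V[G]$: a subset defined there may have no symmetric name. The key difficulty is that truth in $V_\mathscr{S}$ must be expressed in $V$. To handle this, define the relativised forcing relation $\Vdash_\HS$, which is the forcing relation for formulas with quantifiers restricted to the class $\HS$ (relativising to the class name $\{\dot{x} : \dot{x}\in\HS\}$). We prove the corresponding truth lemma: $V_\mathscr{S} \models \varphi(\dot{x}^G)$ iff some $p \in G$ satisfies $p \Vdash_\HS \varphi(\dot{x})$. The symmetry lemma also holds for $\Vdash_\HS$, because $\HS$ is closed under $\mathscr{G}$. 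Given $\dot{x}$ and parameters $\dot{w}_i \in \HS$, set
\[\dot{u} = \{\langle p,\dot{z}\rangle : \dot{z} \in \mathrm{dom}(\dot{x}),\ p \Vdash_\HS \dot{z} \in \dot{x} \wedge \varphi(\dot{z},\vec{\dot{w}})\}.\]
This is a set by Separation in $V$, since $\Vdash_\HS$ is definable. Every $\pi$ in $\mathrm{sym}(\dot{x}) \cap \bigcap_i \mathrm{sym}(\dot{w}_i) \in \mathscr{F}$ fixes $\dot{u}$, because it permutes $\mathrm{dom}(\dot{x})$ and preserves the forcing clause. Hence $\dot{u} \in \HS$, and by the truth lemma $\dot{u}^G = \{z \in x : V_\mathscr{S} \models \varphi(z,\vec{w})\}$, which completes the proof.
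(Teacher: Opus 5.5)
The paper gives no proof of this theorem; it quotes it as the basic fact about symmetric extensions. So the only comparison available is with the standard textbook argument. Your proof is correct, and it uses no choice in $V$, which matters because the paper only assumes $V\models\ZF$.

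The textbook route (e.g.\ Jech's \emph{Set Theory}) handles Separation differently. It shows that $V_\mathscr{S}$ is transitive, closed under the G\"odel operations, and almost universal (every subset of $V_\mathscr{S}$ lying in $V[G]$ is included in some element of $V_\mathscr{S}$). It then appeals to the general criterion that such a class is a model of $\ZF$. This needs only the ordinary forcing relation for bounded formulas and the symmetry lemma; full Separation comes from the reflection argument inside the criterion. You instead build the relativised relation $\Vdash_{\HS}$, with its truth lemma and its symmetry lemma for $\pi\in\mathscr{G}$, and verify Separation and Collection directly. This costs more set-up, but it yields exactly the symmetric forcing relation one later uses to analyse $V_\mathscr{S}$.

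Two details need tidying.

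\emph{Collection.} Make explicit what you collect (or reflect): the $V$-definable statement $\exists\dot b\in\HS\;\, p\Vdash_{\HS}\varphi(\dot a,\dot b,\vec{\dot w})$, over all pairs $(\dot a,p)\in\mathrm{dom}(\dot x)\times\p$. The truth lemma for $\Vdash_{\HS}$ is what turns this into a bound inside $V_\mathscr{S}$, and you only introduce it later, in the Separation paragraph, so it should come first. Without this, ``witnesses can be found'' has no meaning in $V$, since $G$ is not available there.

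\emph{Rank.} Automorphisms preserve the name-rank $\rho(\dot x)=\sup\{\rho(\dot y)+1 : \dot y\in\mathrm{dom}(\dot x)\}$, but not von Neumann rank, because $\pi p$ and $p$ can have different ranks. To obtain a name fixed by all of $\mathscr{G}$, use $\{\dot x\in\HS : \rho(\dot x)<\alpha\}$, or close $\HS\cap V_\alpha$ under $\mathscr{G}$.
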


We refer to this $V_\mathscr{S}$ as a symmetric extension of $V$. We can consider taking symmetric extensions simply as a generalisation of taking forcing extensions, we recover forcing by taking a symmetric extension with names symmetric for the improper filter (which contains the trivial group).

We also know exactly `how far' away from a model of choice a symmetric extension is, a symmetric extension is always a model of small violations of choice, henceforth referred to as $\SVC$. 

\begin{definition}
    We say that \emph{small violations of choice} holds in a model if there is a set $S$ such that for all $X$ there is an ordinal $\eta$ and a surjection $f: S \times \eta \rightarrow X$.
\end{definition}

Results of Blass \cite{Blass-injectivity} and Usuba \cite{usuba} give us a characterisation of being a symmetric extension. It is important for our purposes that we can always force to restore choice in a model of $\SVC$. 

\begin{theorem}\label{svc-equivalences}
    The following are equivalent: 
    \begin{itemize}
        \item $M \models \SVC$
        \item There is a notion of forcing $\p \in M$ such that $\mathds{1}_\p \Vdash \AC$
        \item There is an inner model $V \subseteq M$ such that $V \models \ZFC$, and there is a symmetric system $\mathscr{S} = \langle \p, \mathscr{G}, \mathscr{F} \rangle$ such that $M = \HS^G_\mathscr{S}$ for some $V$-generic $G \subseteq \p$
        \item There is an inner model $V \subseteq M$ such that $V \models \ZFC$ and there is $x \in M$ such that $M = V(x)$
        
    \end{itemize}
\end{theorem}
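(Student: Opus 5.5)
I will prove the equivalences by a cycle of implications, (4) $\Rightarrow$ (3) $\Rightarrow$ (2) $\Rightarrow$ (1) $\Rightarrow$ (4), with the last step (essentially Usuba's result) carrying the real weight.

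\emph{From (4) to (3).} Suppose $M = V(x)$ with $V \models \ZFC$. Since $V$ satisfies choice, I will pick a transitive set $T \ni x$ and a well-order of a set in $V$ large enough to code $\mathcal{P}(T)\cap M$; collapsing this to be countable gives a generic $G$ over $V$ with $x \in V[G]$. I will then take $\mathscr{G}$ to be the automorphisms of the collapse that fix the relevant name for $x$, so that the hereditarily symmetric names evaluate exactly to the sets definable from $x$ and elements of $V$. Checking that $\HS^G = V(x)$ is the standard argument for Grigorieff-style intermediate models. I will cite Usuba rather than redo it.

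\emph{From (3) to (2).} Let $M = \HS^G_\mathscr{S}$. The filter $G$ is $M$-generic for $\p$, since $\p \in V \subseteq M$ and dense sets in $M$ are dense sets in $V[G]$. Moreover $M[G] = V[G] \models \ZFC$. Hence $\p$ itself, viewed in $M$, forces $\AC$. To make this $\mathds{1}_\p \Vdash \AC$ over $M$ rather than just below some condition, I will run a homogeneity argument on $\mathscr{G}$. Alternatively I will replace $\p$ by the cone below a condition forcing $\AC$ in $M$ and note that this cone still forces $M[G]=V[G]$.

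\emph{From (2) to (1).} This is Blass's argument. Let $\p \in M$ with $\mathds{1}_\p \Vdash \AC$, and put $S = \p \times \theta$ for a suitable ordinal, or more precisely the set of nice names for relations on $\p$ of bounded rank. Given $X \in M$, some name $\dot f$ is forced to be an injection of $X$ into an ordinal $\eta$. Mapping each pair (condition, ordinal), or each pair (name, ordinal), to the unique $x$ that the condition forces $\dot f^{-1}(\alpha)$ to equal gives a surjection from $S \times \eta$ onto $X$. I will choose $S$ uniformly as the set of $\p$-names in $V_{\mathrm{rank}(\p)+\omega}$, augmented with $\p$. Uniformity of $S$ in $X$ comes from the fact that only conditions and ordinals are needed.

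\emph{From (1) to (4): the main obstacle.} This is Usuba's theorem. Given the witness $S$, I will show that $V = \mathrm{HOD}_{M}$ relativised suitably, or rather a ground obtained from the ground-model definability of $V$ via Bukovsk\'y's characterisation, is a model of $\ZFC$ with $M = V(S')$ for some transitive $S'$. The plan is as follows. First, use $\SVC$ to show that $M$ satisfies a uniform covering property relative to the $\mathrm{HOD}$-like class built from a well-ordered parameter. Second, apply Bukovsk\'y's theorem to obtain a $\ZFC$ inner model $V$ over which $M$ is sufficiently close. Third, conclude that $M = V(x)$ with $x$ coding $S$ and its transitive closure.

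This step is genuinely deep, and in a note like this I would simply cite \cite{Blass-injectivity} and \cite{usuba}, as the paper does, rather than reprove it.
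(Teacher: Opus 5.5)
Your cycle does not close, because two of the links you treat as routine are false as stated.

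\emph{The step (4)$\Rightarrow$(3).} Here you keep the given ground $V$. But collapsing some set of $V$ produces a $V$-generic filter that has nothing to do with $x$, and an arbitrary $x\in M$ need not lie in any set-generic extension of $V$. For example, suppose $0^\sharp$ exists, or more generally let $r$ be any real that is not set-generic over $L$. Then $M=L[r]=L(r)$ satisfies (4) with $V=L$. However, $r$ lies in no set-generic extension of $L$, so $M$ is not a symmetric extension of $L$. Statement (3) holds for this $M$ only with a different ground, namely $M$ itself with trivial forcing. Grigorieff-style analysis of $V(x)$ already needs $x$ to be set-generic over $V$. Finding a ground over which $M$ sits inside a set-generic extension is precisely the content of Usuba's theorem, so (4)$\Rightarrow$(3) cannot be done directly.

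\emph{The step (3)$\Rightarrow$(2).} In general $G$ is not $M$-generic for $\p$, and $\p$ need not even force $\AC$ over $M$. Take Cohen's basic model, where $\p=\mathrm{Add}(\omega,\omega)$ is countable in $V\subseteq M$. Your own argument for (2)$\Rightarrow$(1) shows the following: if $q\Vdash\dot f:\check A\to\check\eta$ is injective, then $\{r\le q\}\times\eta$ surjects onto $A$ in $M$. So a well-orderable forcing can only well-order sets that were already well-orderable in $M$. Hence the Dedekind-finite set of Cohen reals stays non-well-orderable in every $\p$-generic extension of $M$, and passing to a cone changes nothing. Your justification also conflates density with genericity: $G$ is not generic over $V[G]$. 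Indeed, when $\mathscr F$ is improper, $\p\setminus G\in M=V[G]$ is dense and misses $G$.

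\emph{How to repair the cycle.} Route everything through $\SVC$.
\begin{itemize}
\item (3)$\Rightarrow$(1) is Blass's theorem that symmetric extensions of $\ZFC$ models satisfy $\SVC$.
\item (4)$\Rightarrow$(1) holds because every element of $V(x)$ is definable from finitely many elements of $\mathrm{trcl}(\{x\})$, elements of some $V^V_\beta$ (well-orderable in $M$), and ordinals. So $S=\mathrm{trcl}(\{x\})^{<\omega}$ is a witness.
\item (1)$\Rightarrow$(2): force with $\mathrm{Col}(\omega,S)$. Afterwards every name in $M$, and hence every set, is well-orderable, so $\mathds{1}\Vdash\AC$.
\item Your (2)$\Rightarrow$(1) is fine with $S=\p$. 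Use a name forced by some $q$ (rather than by $\mathds{1}$) to be an injection, together with the conditions $r\le q$.
\item (1)$\Rightarrow$(3) and (1)$\Rightarrow$(4) are Usuba's theorem, which supplies a possibly new $\ZFC$ ground.
\end{itemize}
Since the paper itself only cites Blass and Usuba, citing that last step is acceptable. The direct links you proposed are not.
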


\subsection{Clubs and collapses}

We will rely on the following standard facts about clubs.

\begin{prop}
    Suppose that $\lambda$ is an uncountable regular cardinal, $\{S_\alpha \mid \alpha < \mu\}$ is a sequence of clubs of $\lambda$. If $\mu < \lambda$, then $\bigcap\{S_\alpha \mid \alpha < \mu\}$ is a club; and if $\mu = \lambda$ then the diagonal intersection $\triangle\{S_\alpha \mid \alpha < \mu\}$ is a club.
\end{prop}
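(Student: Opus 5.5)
The plan is the standard $\ZFC$ argument, checked to see that it uses no choice. This matters because the paper works in models of $\ZF$. Every choice we make is a choice of an ordinal, so we can always take the \emph{least} one. The only inputs are that the sequence $\langle S_\alpha \mid \alpha<\mu\rangle$ is given as a set, and that regularity of $\lambda$ means there is no cofinal map from a smaller ordinal into $\lambda$. In particular, the supremum of fewer than $\lambda$ ordinals below $\lambda$ is below $\lambda$. Uncountability gives that the supremum of an $\omega$-sequence below $\lambda$ is below $\lambda$.

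\emph{Case $\mu<\lambda$.} Let $C=\bigcap_{\alpha<\mu}S_\alpha$. Closure is immediate: if $\xi<\lambda$ is a limit of points of $C$, then $\xi$ is a limit of points of each $S_\alpha$, so $\xi\in S_\alpha$ for every $\alpha$. For unboundedness, fix $\beta<\lambda$ and set $\beta_0=\beta$. Given $\beta_n$, let $g_n(\alpha)=\min(S_\alpha\setminus(\beta_n+1))$, and let $\beta_{n+1}=\sup_{\alpha<\mu}g_n(\alpha)$. This supremum is $<\lambda$ by regularity, since $\mu<\lambda$. The sequence $\langle\beta_n\rangle$ is defined outright by recursion. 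Its supremum $\xi$ is $<\lambda$ because $\lambda$ is uncountable regular. For each $\alpha$, the points $g_n(\alpha)\in S_\alpha$ lie in $(\beta_n,\beta_{n+1}]$ and so are cofinal in $\xi$. Hence $\xi\in S_\alpha$ by closure, so $\xi\in C$ and $\xi>\beta$.

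\emph{Case $\mu=\lambda$.} Let $D=\triangle_{\alpha<\lambda}S_\alpha=\{\xi<\lambda\mid \xi\in\bigcap_{\alpha<\xi}S_\alpha\}$.

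For closure, suppose $\xi<\lambda$ is a limit of points of $D$ and fix $\alpha<\xi$. Every $\eta\in D$ with $\alpha<\eta<\xi$ lies in $S_\alpha$, and such $\eta$ are cofinal in $\xi$. Therefore $\xi\in S_\alpha$, and so $\xi\in D$.

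For unboundedness, use the first case to define $f(\gamma)=\min\bigl(\bigcap_{\alpha<\gamma}S_\alpha\setminus(\gamma+1)\bigr)$. This is well defined because $\gamma<\lambda$, so the first case applies to the intersection. Starting from $\beta$, iterate $\beta_{n+1}=f(\beta_n)$ and let $\xi=\sup_n\beta_n<\lambda$. Given $\alpha<\xi$, pick $n$ with $\alpha<\beta_n$. Then $\beta_{m+1}\in S_\alpha$ for all $m\ge n$, so $\xi\in S_\alpha$ by closure. Thus $\xi\in D$.

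I do not expect a real obstacle here. The only point needing care is the $\ZF$ bookkeeping: making sure the sups used stay below $\lambda$ (regularity and uncountability) and that all selections are canonical minima rather than arbitrary choices.
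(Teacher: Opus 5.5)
Your proof is correct. The paper gives no proof of this proposition, only quoting it as a standard fact about clubs, and your argument is the usual textbook one (cf.\ Jech, Theorem 8.3 and Lemma 8.4). Your $\ZF$ bookkeeping is a genuine plus, since the paper applies this fact in choiceless models. The hypothesis that the clubs come as an indexed sequence, with every selection made as a canonical minimum, is exactly what fails in Karagila's model, where each $S_\alpha$ contains a club but no sequence of such clubs exists.
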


\begin{prop}\label{collapse}
    Let $\kappa$ be a regular cardinal in $M$ (a model of $\ZF$). If in some extension, $M[G]$, there exists a sequence of $<\kappa$ many clubs, with empty intersection, then $\kappa$ is not regular in $M[G]$. In particular, if $\kappa$ is a successor cardinal in $M$, and $M[G] \models \ZFC$, then $\kappa$ must have been collapsed. 
\end{prop}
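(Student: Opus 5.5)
The plan is to argue by contradiction, using only the definitions of club and regularity, so that the first claim goes through in $\ZF$ without any choice in $M[G]$. Suppose $\kappa$ were still regular in $M[G]$, and let $\langle S_\alpha \mid \alpha < \mu\rangle \in M[G]$ with $\mu < \kappa$ be a sequence of clubs of $\kappa$ with $\bigcap_{\alpha<\mu} S_\alpha = \emptyset$. The standard argument that the intersection of fewer than $\kappa$ clubs is club uses no choice once the sequence of clubs is given. The choice needed to pick one club from each index is avoided because the sequence itself is an object of $M[G]$, and each $S_\alpha$ is a set of ordinals, so we can always take least elements. Any uncountability hypothesis needed for the clubs to interact properly will be taken from $\kappa$ being regular in $M$, since $\kappa$ is then uncountable or the statement is trivial.

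The key steps, in order, are as follows.

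\emph{Closure.} The intersection $C = \bigcap_{\alpha<\mu} S_\alpha$ is closed, since each $S_\alpha$ is closed.

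\emph{Unboundedness.} Given $\beta < \kappa$, define in $M[G]$ a sequence $\beta_0 = \beta$ and
\[
\beta_{n+1} = \sup_{\alpha<\mu} \min\bigl(S_\alpha \setminus (\beta_n+1)\bigr).
\]
This is a supremum of $\mu < \kappa$ ordinals below $\kappa$, defined without choice, so regularity of $\kappa$ in $M[G]$ gives $\beta_{n+1} < \kappa$. Then $\beta_\omega = \sup_n \beta_n$. Since $\kappa$ is regular and uncountable (if $\omega < \kappa$ is still regular in $M[G]$), we get $\beta_\omega < \kappa$, and $\beta_\omega$ is a limit point of every $S_\alpha$, hence lies in every $S_\alpha$ by closure.

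\emph{Contradiction.} This produces an element of the intersection above $\beta$, so $C$ is unbounded, in particular nonempty, contradicting the hypothesis. Hence $\kappa$ is not regular in $M[G]$.

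The step I expect to require the most care is the edge case where $\kappa$ could be $\omega$, or where regularity is defined via cofinal maps rather than unbounded subsets. I will fix the definition that $\kappa$ is regular iff every subset of $\kappa$ of order type $<\kappa$ is bounded. With that definition, every step above consists of explicit suprema of well-ordered families of ordinals, which is choice-free. Clubs only make sense for uncountable $\kappa$, and if $\kappa = \omega$ there are no such sequences to worry about.

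For the ``in particular'' claim, assume $\kappa = \lambda^+$ in $M$ and $M[G] \models \ZFC$. By the first part, $\kappa$ is singular in $M[G]$. Under $\AC$ every successor cardinal is regular, so $\kappa$ cannot be a successor cardinal in $M[G]$. If $\kappa$ remained a cardinal, it would have to be a limit cardinal of $M[G]$. But $\lambda$ is an $M[G]$-cardinal or smaller, and there are no $M[G]$-cardinals strictly between $\lambda$ and $\kappa$, because any such cardinal would be an ordinal of $M$-cardinality $\lambda$ that is an $M[G]$-cardinal, which is impossible since bijections in $M$ persist to $M[G]$. So if $\kappa$ were a cardinal of $M[G]$, it would be $(|\lambda|^{+})^{M[G]}$, a successor, hence regular, contradiction. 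Thus $\kappa$ is not a cardinal in $M[G]$, i.e.\ it has been collapsed.
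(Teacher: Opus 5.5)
The paper gives no proof of this proposition. It lists it among the ``standard facts about clubs'' and only invokes it in Proposition~\ref{collapsed}. Your argument is the standard one and is correct.

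The first part is the $\ZF$ fact that, for uncountable regular $\kappa$, the intersection of a \emph{given} sequence of fewer than $\kappa$ clubs is club. You correctly observe that the only place choice could enter, namely selecting the clubs, is handled by the hypothesis that the sequence lies in $M[G]$. This is exactly the point in Proposition~\ref{collapsed}, where such a sequence only becomes available once choice is restored. Your argument for the second part is also fine: there are no $M[G]$-cardinals in $(\lambda,\kappa)$, so a surviving $\kappa$ would be $(|\lambda|^+)^{M[G]}$, which is regular under $\AC$.

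Two small corrections.

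First, for $\beta_{n+1}<\kappa$ under your definition of regularity, the relevant facts are these: the set $\{\min(S_\alpha\setminus(\beta_n+1)) \mid \alpha<\mu\}$ has cardinality at most $|\mu|$, and a regular $\kappa$ is a cardinal in $M[G]$. Hence its order type, which may exceed $\mu$, is below $\kappa$.

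Second, your handling of $\kappa=\omega$ is off. Regularity in $M$ does not yield uncountability. Moreover, if clubs in $\omega$ are read as infinite subsets, the statement is false rather than trivial: the even and odd numbers are a counterexample. The proposition should carry the hypothesis $\kappa>\omega$, as the preceding proposition does. With that hypothesis, the uncountability of $\kappa$ in $M[G]$ that you need for $\beta_\omega<\kappa$ comes from your contradiction hypothesis, which is in fact how you use it.
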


\section{Collapsing cardinals}

\subsection{The Fodor property}
Using a construction of Karagila \cite{karagila-fodor}, we know that it is consistent that Fodor's Lemma can fail everywhere. In particular, for any uncountable regular cardinal $\lambda$, and $\mu < \lambda$, there exists a symmetric extension in which the $(\lambda, \mu)$-Fodor property fails. For our purposes, we will assume $\lambda$ is a limit ordinal. 

\begin{definition}
    Let $\lambda$ be an uncountable, regular cardinal, and $\mu \leqslant \lambda$. We say that $\lambda$ has the \emph{$(\mu, \lambda)$-Fodor property} if whenever $S \subseteq \lambda$ is a stationary set, and $f: S \rightarrow \lambda$ is a regressive function with range of cardinality $\leqslant \mu$, there is some $\alpha$ such that $f^{-1}(\{ \alpha\})$ is stationary. 
\end{definition}

We can phrase the Fodor property in terms of a property of the club filter.

\begin{theorem}
    Let $\lambda$ be an uncountable regular cardinal. The following are equivalent for any $\mu < \lambda$: 
    \begin{enumerate}
        \item $\lambda$ has the $(\lambda, \mu)$-Fodor property
        \item The club filter of $\lambda$ is $\mu^+$-complete. 
    \end{enumerate}
\end{theorem}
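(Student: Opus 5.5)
The plan is to prove the two implications directly in $\ZF$, using only the definition of the Fodor property and the closure properties of clubs. The one point needing care is that without choice we may not choose clubs witnessing non-stationarity, so every argument must produce the relevant clubs definably.

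\textbf{(2) $\Rightarrow$ (1).} Suppose the club filter on $\lambda$ is $\mu^+$-complete. Let $S \subseteq \lambda$ be stationary and $f : S \to \lambda$ regressive with $|\operatorname{ran} f| \leqslant \mu$. Suppose, towards a contradiction, that $f^{-1}(\{\alpha\})$ is non-stationary for every $\alpha \in \operatorname{ran} f$. The sets $\lambda \setminus f^{-1}(\{\alpha\})$, for $\alpha \in \operatorname{ran} f$, are then at most $\mu$ many members of the club filter. This family is defined outright, so no choice of witnessing clubs is needed. By $\mu^+$-completeness their intersection $\lambda \setminus S$ lies in the club filter. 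This contradicts the stationarity of $S$. Regressivity is not even used in this direction.

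\textbf{(1) $\Rightarrow$ (2).} Suppose the club filter fails to be $\mu^+$-complete. Then there are $\nu \leqslant \mu$ sets $A_i$ ($i<\nu$) in the club filter whose intersection $A$ is not in the filter, so $T = \lambda \setminus A$ is stationary. Each $t \in T$ misses some $A_i$; define $g(t)$ to be the least such $i$. No choice is involved in this definition.

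The main obstacle is that $g$ need not be regressive. I would remove finitely many small ordinals: let $S = T \setminus (\nu+1)$. Since $\nu \leqslant \mu < \lambda$, this set is still stationary, and on $S$ we have $g(t) < \nu < t$, so $g$ is regressive with range of size at most $\mu$. By the Fodor property there is some $i$ with $g^{-1}(\{i\})$ stationary. But $g^{-1}(\{i\}) \subseteq \lambda \setminus A_i$, and this set is disjoint from a set in the club filter, hence non-stationary. This is a contradiction.

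One check remains: "in the club filter" means containing a club, and non-stationary means disjoint from some club. Both facts are used only existentially for a single set at a time, so no choice enters. This completes the equivalence.
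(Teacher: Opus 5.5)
Your proof is correct and goes through in $\ZF$. In both directions the only objects you construct, the family $\{\lambda\setminus f^{-1}(\{\alpha\})\mid\alpha\in\operatorname{ran}f\}$ and the function $g(t)=\min\{i<\nu\mid t\notin A_i\}$, are defined outright. A witnessing club is only ever invoked for one set at a time.

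The paper gives no proof of this equivalence; it is imported from Karagila's work on the failure of Fodor's lemma. Your argument is the standard one.

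There is one wording slip. The set $S=T\setminus(\nu+1)$ is obtained by removing $\nu+1$ ordinals, which need not be finitely many (take $\nu=\omega$). What you actually use, correctly, is that $\lambda\setminus(\nu+1)$ is a club, so removing this bounded set preserves stationarity.

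Your care about not choosing clubs is where the content of the argument lies. In $\ZF$, the intersection of a \emph{sequence} of fewer than $\lambda$ clubs is still a club. So $\mu^+$-completeness can only fail when the filter sets admit no sequence of witnessing clubs. This is exactly what happens to the sets $S_\alpha$ in Karagila's model.
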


To show that $(\kappa, \mu)$-Fodor property can fail, we will construct a symmetric extension $M$ in which the club filter is not normal. The symmetric system $\langle \p, \mathscr{F}, \mathscr{G}\rangle$ which generates $M$ is defined as follows: 

\begin{itemize}
    \item $\p$ is an iteration of two forcings, we add a Cohen subset to $\kappa$, which gives us a disjoint sequence of stationary sets $S_\alpha$, then shoot clubs into the complement of each stationary set. 
    \begin{itemize}
        \item Let $\Q_0$ be $\text{Add}(\kappa, \mu)$. 
        \item Denote by $\Q_{1,\alpha}$ the forcing $\text{Club}(S_\alpha)$, and let $\Q_1$ be the $<\mu$ support product $\prod_{\alpha<\mu} \Q_{1,\alpha}$. 
        \item The forcing $\p$ is $\Q_0 \ast \dot{\Q}_1$. 
    \end{itemize}
    
    \item The automorphism group $\mathscr{G}$ is the group of automorphisms induced from names of automorphisms of $\dot{\Q}_1$ which are forced to belong to the $<\mu$-suppor products of the groups Aut($\Q_{1,\alpha}$).  

We say $\sigma \in \text{fix}(E)$ if for every $\alpha \in E$, $\Vdash_{\Q_0} \dot{\sigma} \restriction \dot{\Q}_{1, \alpha} = \text{id}$. 

    \item Define $\mathscr{F}$ to be the filter of subgroups generated by $\text{fix}(E)$, for $E \in [\mu]^{<\mu}$.

\end{itemize}

\begin{theorem}\label{not-fodor}\cite[Theorem 4.5]{karagila-fodor}
    In $M$, for every $\alpha < \mu$, $S_\alpha$ contains a club, but the intersection $\bigcap_{\alpha<\mu}S_\alpha = \varnothing$. Namely, the $(\kappa,\mu)$-Fodor property fails.
\end{theorem}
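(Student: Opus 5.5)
The plan has two halves: show that each $S_\alpha$ contains a club in $M$, and show that the intersection of all the $S_\alpha$ is empty. Let $G = G_0 \ast G_1$ be $V$-generic for $\p = \Q_0 \ast \dot{\Q}_1$, and write $V[G_0]$ for the intermediate model.

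\textbf{Setting up the sets.} First I will fix how the $S_\alpha$ arise from $G_0$. View the Cohen generic for $\text{Add}(\kappa,\mu)$ as a function $c:\kappa \to \mu$, and let $T_\alpha = \{\xi \in \kappa \cap \mathrm{cof}(\omega) : c(\xi) = \alpha\}$. A standard density argument, using that $\text{Add}(\kappa,\mu)$ is $<\kappa$-closed, shows the $T_\alpha$ are pairwise disjoint stationary sets in $V[G_0]$. The statement's convention ``shoot a club into the complement of each stationary set'' I read as follows: $\Q_{1,\alpha} = \text{Club}(S_\alpha)$ with $S_\alpha = \kappa \setminus T_\alpha$. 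Then $\bigcap_\alpha S_\alpha = \kappa \setminus \bigcup_\alpha T_\alpha$. To make this intersection empty, I arrange that every ordinal below $\kappa$ lies in some $T_\alpha$. So I take $T_\alpha = c^{-1}(\alpha)$ outright and let $\text{Club}(S_\alpha)$ consist of closed bounded subsets of $S_\alpha$. Since each $S_\alpha$ is co-small only on a stationary but co-stationary piece, $\text{Club}(S_\alpha)$ is fat enough to be $\kappa$-distributive. Under this choice the emptiness of $\bigcap_\alpha S_\alpha$ is immediate and absolute. The sequence $\langle S_\alpha \mid \alpha<\mu\rangle$ lies in $M$ because $\dot c$ is a $\Q_0$-name, and every automorphism in $\mathscr{G}$ acts trivially on $\Q_0$, so $\dot c$ is fixed by all of $\mathscr{G}$.

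\textbf{Each $S_\alpha$ contains a club in $M$.} The generic club $\dot C_\alpha$ added by the $\alpha$-th coordinate of $G_1$ is a subset of $S_\alpha$ that is closed and unbounded in $V[G]$. Its canonical name is built only from conditions on coordinate $\alpha$, so it is fixed by $\text{fix}(\{\alpha\})\in\mathscr{F}$, and hence it lies in $\HS$. Closedness and unboundedness are absolute down to $M$, so $C_\alpha \in M$ witnesses that $S_\alpha$ contains a club.

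\textbf{Failure of the Fodor property.} Given the two facts above, the failure is the standard translation. In $M$ we have clubs $C_\alpha\subseteq S_\alpha$ with $\bigcap_{\alpha<\mu} C_\alpha=\varnothing$, so the club filter is not $\mu^+$-complete. By the equivalence theorem for the Fodor property stated earlier, the $(\kappa,\mu)$-Fodor property fails.

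\textbf{Main obstacle.} The delicate point is that $M$ must not contain the whole sequence $\langle C_\alpha\mid\alpha<\mu\rangle$; if it did, $M$ would be a model in which a diagonal-style argument contradicts the regularity of $\kappa$. I must also check that $\kappa$ stays regular, and that the $S_\alpha$ remain stationary until their clubs are added. For the first, I argue with the $<\mu$-supports. A name for the sequence would need support containing all of $\mu$. Automorphisms that fix $E\in[\mu]^{<\mu}$ but act nontrivially on some coordinate $\beta\notin E$, by translating the generic club in $\Q_{1,\beta}$, move $\dot C_\beta$ to a different club. The approximation lemma then shows the sequence is not in $M$. For the second, $\kappa$-distributivity of $\Q_1$ over $V[G_0]$ shows that no bounded subsets of $\kappa$ are added, so $\kappa$ remains regular in $V[G]$ and hence in $M$. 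I expect this distributivity argument for the $<\mu$-support product of club-shooting forcings to be the main technical step. I would model it on the standard proof that shooting clubs through fat stationary sets is distributive, using an elementary submodel of size $<\kappa$ closed under $<\mu$-sequences.
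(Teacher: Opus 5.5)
Your first two steps are correct and match what the paper uses. The generic club for $\Q_{1,\alpha}$ has a hereditarily symmetric name with support $\{\alpha\}$. With $S_\alpha=\kappa\setminus c^{-1}(\alpha)$, the intersection $\bigcap_{\alpha<\mu}S_\alpha$ is empty by definition.

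The gap is in how you secure the regularity of $\kappa$. You need it, because the Fodor property and its equivalence with $\mu^+$-completeness are only formulated for uncountable regular cardinals. You argue that $\Q_1$ is $\kappa$-distributive over $V[G_0]$, so that $\kappa$ stays regular in $V[G]$ and hence in $M$. This is false, and it contradicts your own construction. $V[G]\models\ZFC$ contains the whole sequence $\langle C_\alpha\mid\alpha<\mu\rangle$, with $\bigcap_{\alpha<\mu}C_\alpha\subseteq\bigcap_{\alpha<\mu}S_\alpha=\varnothing$. By the very argument you invoke in your ``main obstacle'' paragraph, $\kappa$ therefore cannot be regular in $V[G]$; this is exactly the phenomenon behind Proposition~\ref{collapsed}. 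The elementary-submodel proof you plan to imitate breaks at the decisive point. At $\delta=\sup(N\cap\kappa)$ you would need $\delta\in S_\alpha$ for every $\alpha<\mu$, i.e.\ $c(\delta)\notin\mu$, which is impossible.

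Regularity has to be proved in $M$ directly, using the symmetry. This is what Lemmas~\ref{4.2} and~\ref{4.3} (Proposition~\ref{not-collapsed}) provide:
\begin{enumerate}
\item Every set of ordinals in $M$, in particular any function $\gamma\to\kappa$ with $\gamma<\kappa$, already has a name for $\Q_0\ast\dot{\Q}_1\restriction\alpha$ for some $\alpha<\mu$.
\item This restricted iteration has a $\kappa$-closed dense subset. At a limit stage $\delta$ one may give $c(\delta)$ a value in $\mu\setminus\alpha$, which puts $\delta$ into every $S_\beta$ with $\beta<\alpha$. This is precisely the step that fails for the full product.
\end{enumerate}
Hence every ${<}\kappa$-sequence of ordinals in $M$ lies in $V$, and $\kappa$ remains regular in $M$. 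Once this is established, $\langle C_\alpha\mid\alpha<\mu\rangle\notin M$ follows immediately. Your separate automorphism argument for that is unnecessary, and as written it only shows that one particular name is moved, not that no symmetric name for the sequence exists.

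Two smaller corrections. First, the fatness of $S_\alpha$, which you need for the distributivity of each $\mathrm{Club}(S_\alpha)$, comes from the genericity of $c$. It does not follow merely from $c^{-1}(\alpha)$ being stationary and co-stationary. Second, the failure of $\mu^+$-completeness in $M$ should be witnessed by the sequence $\langle S_\alpha\mid\alpha<\mu\rangle$, which lies in $M$. It should not be witnessed by the clubs $C_\alpha$, whose sequence is not in $M$.
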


\subsection{Well-ordering the universe}

First, we want to check that cardinals have not been collapsed in $M$, the symmetric extension. 

\begin{prop}\label{not-collapsed}
    $\kappa$ has not been collapsed in $M$.
\end{prop}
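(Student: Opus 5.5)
Since $V \subseteq M \subseteq V[G]$ and cardinality can only drop as we pass to larger models, it is enough to show that $\kappa$ remains a cardinal in the full generic extension $V[G]$, where $G$ is $V$-generic for $\p = \Q_0 \ast \dot{\Q}_1$. Indeed, if $M$ contained a surjection from some $\beta<\kappa$ onto $\kappa$, then that same surjection would lie in $V[G]$. So the plan is to check that $\p$ preserves $\kappa$ over $V$. Here we assume the standing hypotheses of Karagila's construction: $V \models \ZFC$, $\kappa$ is regular with $\kappa^{<\kappa}=\kappa$, and $\mu<\kappa$.

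The first step handles $\Q_0 = \text{Add}(\kappa,\mu)$. This forcing is $<\kappa$-closed, so it adds no new sequences of length $<\kappa$. Hence it collapses no cardinal $\leqslant\kappa$ and keeps $\kappa$ regular. It also preserves stationary subsets of $\kappa$. This last point matters, because the $S_\alpha$ produced by the Cohen generic must be stationary in $V[G_0]$ for the club-shooting step to make sense.

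The second step handles $\Q_1$, the $<\mu$-support product of the forcings $\text{Club}(S_\alpha)$ taken in $V[G_0]$. Each factor $\text{Club}(S_\alpha)$, whose conditions are closed bounded subsets of $\kappa$ contained in the relevant set, is not $<\kappa$-closed. However, it is $<\kappa$-distributive, and in fact $\kappa$-strategically closed, provided the set we shoot the club through is fat or stationary in the right sense. The standard argument builds descending sequences of conditions along a continuous elementary chain of submodels $N_i \prec H_\theta$, using stationarity to find a $\delta$ in the target set with $\delta = \sup(N_\delta\cap\kappa)$. I will verify that this argument goes through uniformly for the whole $<\mu$-support product. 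Given a sequence of fewer than $\kappa$ dense open sets, I will meet them all by running the construction simultaneously on all coordinates in the (growing, but of size $<\mu$ at each stage) supports. The key point is to choose the limit point $\delta$ in the intersection of the relevant sets across coordinates. For the complements of the disjoint $S_\alpha$, such a $\delta$ exists because the generic sequence is disjoint and each complement contains the others, so any $\delta$ outside $\bigcup S_\alpha$ that lies in a club works. The Cohen genericity ensures stationarily many such $\delta$ exist; otherwise one takes $\delta \in S_\beta$ for some $\beta$ outside the current support. This gives $<\kappa$-distributivity of $\Q_1$ in $V[G_0]$, so no new bounded subsets of $\kappa$ are added and $\kappa$ is preserved.

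Combining the two steps, $\kappa$ is a cardinal, indeed regular, in $V[G]$, and therefore a cardinal in $M$. The main obstacle is the distributivity of the product of club-shooting forcings. The limit stages of the fusion require a single ordinal $\delta$ that is simultaneously a legal closure point for every coordinate in play. Here the precise definition of $\text{Club}(S_\alpha)$ in Karagila's construction, including which set the clubs avoid, and the fact that supports have size $<\mu<\kappa$ must be used carefully. If this direct argument proves awkward, I would fall back on citing the analysis in \cite{karagila-fodor}, which shows that $\p$ has a dense $<\kappa$-distributive part.
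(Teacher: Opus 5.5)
Your reduction to $V[G]$ is valid as an implication, but the statement you then set out to prove is false: the full forcing $\p=\Q_0\ast\dot{\Q}_1$ does not preserve $\kappa$. The model $V[G]$ satisfies $\ZFC$ and contains $M$. It contains the generic clubs $C_\alpha\subseteq S_\alpha$ for \emph{all} $\alpha<\mu$, together with the sequence $\langle C_\alpha\mid\alpha<\mu\rangle$, which is read off from $G$. Since $\mu<\kappa$ and $\bigcap_{\alpha<\mu}S_\alpha=\varnothing$ (Theorem \ref{not-fodor}), fewer than $\kappa$ clubs have empty intersection in $V[G]$, so $\kappa$ is not regular there. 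When $\kappa$ is a successor cardinal (e.g.\ $\kappa=\omega_1$, $\mu=\omega$, the case used for $(\star)$) it is collapsed, exactly as in Proposition \ref{collapsed}.

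Hence $\Q_1$ cannot be $<\kappa$-distributive over $V[G_0]$, and your fusion fails at the point you flagged. At a limit stage the closure point $\delta$ must be legal for every coordinate in the union of the supports used so far. Each support has size $<\mu$, but along a sequence of length $\text{cf}(\mu)$ (an $\omega$-sequence of finite supports when $\mu=\omega$) that union can be all of $\mu$. Then $\delta$ would have to lie in $\bigcap_{\alpha<\mu}S_\alpha=\varnothing$, and the would-be lower bound would not even have support of size $<\mu$. Choosing $\beta$ outside the current support does not survive the limit. In your notation, the disjoint pieces cover $\kappa$, so the $\delta$ ``outside $\bigcup S_\alpha$'' that you invoke does not exist. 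Your fallback citation has the same defect. The hypothesis $\alpha<\mu$ in Lemma \ref{4.3} refers to the bounded piece $\Q_0\ast\dot{\Q}_1\restriction\alpha$, and by the argument above no dense subset of the whole of $\p$ can be $<\kappa$-distributive.

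The missing idea is that you must exploit the symmetry instead of passing to $V[G]$. $M$ contains each $C_\alpha$ individually but not the $\mu$-sequence of them, and that sequence is exactly what destroys $\kappa$ in $V[G]$. In the paper, a collapse of $\kappa$ in $M$ would be coded by a set of ordinals. By Lemma \ref{4.2}, which is where the filter generated by the groups $\text{fix}(E)$, $E\in[\mu]^{<\mu}$, is used, such a set has a hereditarily symmetric $\Q_0\ast\dot{\Q}_1\restriction\alpha$-name for some $\alpha<\mu$. So it already lies in the extension by that bounded piece. By Lemma \ref{4.3} that piece has a $\kappa$-closed dense subset, so it adds no new sequences of length $<\kappa$ and cannot collapse $\kappa$. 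The bound $\alpha<\mu$ is precisely what rescues the closure argument you attempted. Limit points now only need to lie in $\bigcap_{\gamma<\alpha}S_\gamma$, which the Cohen coordinate can arrange, rather than in the empty set $\bigcap_{\gamma<\mu}S_\gamma$.
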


\begin{proof}
    The combination of Lemma \ref{4.2} and Lemma \ref{4.3} shows that all the symmetric names are added by a $\kappa$-closed forcing, and so in $M$, $\kappa$ has not been collapsed. 
\end{proof}

\begin{lemma}\label{4.2}\cite[Lemma 4.2]{karagila-fodor}
    Suppose that $\dot{A} \in \HS$ is a $\p$-name for a set of ordinals. Then there is some $\alpha < \mu$ and $\dot{A}' \in \HS$ which is $\Q_0 \ast \dot{\Q}_1 \restriction \alpha$-name and $\mathds{1} \Vdash \dot{A} = \dot{A}'$. 
\end{lemma}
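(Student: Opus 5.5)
The argument is the standard one for symmetric extensions: use the support of a symmetric name to replace it with a name that only mentions the coordinates in its support. Let $\dot{A} \in \HS$ be a name for a set of ordinals. Since $\text{sym}(\dot{A}) \in \mathscr{F}$, there is some $E \in [\mu]^{<\mu}$ with $\text{fix}(E) \subseteq \text{sym}(\dot{A})$. Because $\mu$ is regular and $|E| < \mu$, we can choose $\alpha < \mu$ with $E \subseteq \alpha$, so $\text{fix}(\alpha) \subseteq \text{fix}(E) \subseteq \text{sym}(\dot{A})$. We may assume $\dot{A}$ has the form $\{\langle p, \check{\xi}\rangle \mid p \Vdash \check\xi \in \dot{A}\}$; this only enlarges the name by canonical check names and keeps it hereditarily symmetric with the same symmetry group.

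We then define the restricted name
\[\dot{A}' = \{\langle p \restriction \alpha, \check{\xi}\rangle \mid p \Vdash \check{\xi} \in \dot{A}\}.\]
Here $p \restriction \alpha$ is the condition in $\Q_0 \ast \dot{\Q}_1 \restriction \alpha$ that keeps the $\Q_0$ part and the $\dot{\Q}_1$ coordinates below $\alpha$. This is a name for the subforcing $\Q_0 \ast \dot{\Q}_1\restriction\alpha$. It is symmetric: any automorphism acting only on coordinates below $\alpha$ permutes the conditions $p$ forcing $\check\xi \in \dot A$, since it lies in the group generated by $\mathscr{G}$ and fixes check names. Moreover $\text{fix}(\alpha)$ acts trivially on it, so $\dot{A}' \in \HS$. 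One inclusion of $\mathds{1} \Vdash \dot{A} = \dot{A}'$ is immediate, because $p \le p\restriction\alpha$ in the direction needed: if $\xi \in \dot{A}^G$, some $p \in G$ forces it, and then $p \restriction \alpha \in G$.

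The main obstacle is the converse. We must show that if $p \Vdash \check\xi \in \dot{A}$, then already $p \restriction \alpha \Vdash \check\xi \in \dot{A}$. Suppose not. Then some $q \le p\restriction\alpha$ forces $\check\xi \notin \dot{A}$, and we may extend $q$ so that its $\Q_0$ part decides enough to name the relevant club conditions. We then want $\sigma \in \text{fix}(\alpha)$ such that $\sigma p$ and $q$ are compatible. On each coordinate $\beta \ge \alpha$ in the ($<\mu$-sized) supports, $\sigma$ should move the $\text{Club}(S_\beta)$ part of $p$ to something compatible with that of $q$. This uses the homogeneity of club shooting: $\text{Club}(S_\beta)$ is forced to be sufficiently homogeneous, for instance because any two conditions with the same maximum-type can be matched by an automorphism in $\text{Aut}(\Q_{1,\beta})$. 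Having chosen such a $\sigma_\beta$ name-by-name, we combine them into a name for an element of the $<\mu$-support product, which gives $\sigma \in \mathscr{G}$.

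Then $\sigma p \Vdash \check\xi \in \sigma\dot{A} = \dot{A}$, which contradicts compatibility with $q$. Hence $\dot{A}'$ is as required. I expect the delicate part to be making the homogeneity argument work inside the $\Q_0$-names, that is, arranging $\sigma$ uniformly below the common $\Q_0$ part. If that fails directly, I would instead fall back on Karagila's trick of passing to a dense set of conditions where the $\Q_0$ component decides the relevant values of $\dot{\Q}_1$-conditions.
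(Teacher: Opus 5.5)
Your skeleton is the standard one: pass to the canonical form of $\dot A$, restrict conditions to $\alpha$, get $\mathds{1}\Vdash\dot A\subseteq\dot A'$ for free, and for the converse find $\sigma\in\text{fix}(\alpha)$ with $\sigma p$ compatible with $q$. That last step is the entire content of the lemma, and you only assert it.

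\textbf{The gap.} The step needs each $\Q_{1,\beta}=\text{Club}(S_\beta)$ to be forced weakly homogeneous via automorphisms in $\text{Aut}(\Q_{1,\beta})$. Your remark that ``any two conditions with the same maximum-type can be matched by an automorphism'' is neither proved nor suited to this poset. Under end-extension, $\text{Club}(S_\beta)$ is a tree, so its automorphisms fix $\emptyset$ and preserve the order type of conditions. They must also respect which limit branches have a top, i.e.\ which suprema lie in $S_\beta$.

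Now suppose $p(\beta)=\{\gamma\}$ and $q(\beta)=\{\delta\}$ with $\gamma<\delta$. You need an automorphism sending $\{\gamma\}$ exactly to $\{\delta\}$. That is an isomorphism between the cone of closed sets with minimum $\gamma$ and the cone with minimum $\delta$. It must match branches converging to suprema in $(\gamma,\delta]$ with branches converging above $\delta$, while preserving membership of those suprema in $S_\beta$. Nothing about maxima provides this.

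Nor can the step be bypassed. Suppose that for unboundedly many $\beta$ the singletons did not form one orbit, and let $O_\beta$ be a nontrivial invariant set. Let $C_\beta$ be the generic club. Then the name for $\{\beta<\mu:\min C_\beta\in O_\beta\}$ would be fixed by all of $\mathscr{G}$, yet it would depend on unboundedly many coordinates, contradicting the lemma. Your proposed fallback, about the $\Q_0$-part deciding names, does not address this issue.

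\textbf{A way to close it.} Cone homogeneity is easy here. If $c\perp d$ in $\text{Club}(S_\beta)$, pick $\theta\in S_\beta$ above $\max c$ and $\max d$. Then $c\cup\{\theta\}\perp d\cup\{\theta\}$, and the map $c\cup\{\theta\}\cup f\mapsto d\cup\{\theta\}\cup f$ is an isomorphism of the cones below them. In the Boolean completion $\mathbb{B}_\beta$, take the involution that swaps these two cones via this isomorphism and is the identity below the complement of their join. It is an automorphism, and it sends $[c]$ to an element meeting $[d]$. So if you set up the symmetric system with the completions as coordinates, weak homogeneity holds.

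The rest is then routine. Write $p=(p_0,\dot p_1)$ and $q=(q_0,\dot q_1)$. In $V^{\Q_0}$ below $q_0$, choose such a $\tau_\beta$ for each of the fewer than $\mu$ relevant $\beta\geq\alpha$. The mixing lemma then gives a $\Q_0$-name $\dot\sigma$, forced to be the identity below $\alpha$, with $q_0\Vdash\dot\sigma(\dot p_1)\parallel\dot q_1$. No preliminary decision by the $\Q_0$-part is needed.

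\textbf{A smaller error.} You claim that any automorphism acting below $\alpha$ permutes the conditions forcing $\check\xi\in\dot A$. This is false: only elements of $\text{sym}(\dot A)$ do. The correct reason that $\dot A'\in\HS$ is that $\text{fix}(\alpha)$ fixes it, or that $\text{sym}(\dot A)\subseteq\text{sym}(\dot A')$.
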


\begin{lemma}\label{4.3}\cite[Lemma 4.3]{karagila-fodor}
    Suppose that $\alpha < \mu$. Then the two step iteration $\Q_0 \ast \dot{\Q}_1$ has a $\kappa$-closed dense subset. 
\end{lemma}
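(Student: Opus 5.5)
The plan is to exhibit explicitly a dense set $D \subseteq \Q_0 \ast (\dot{\Q}_1 \restriction \alpha)$ of conditions in which the first coordinate already decides everything about the second, and then to check directly that $D$ is $\kappa$-closed. Since $\alpha < \mu < \kappa$, the product $\dot{\Q}_1 \restriction \alpha = \prod_{\beta<\alpha} \dot{\Q}_{1,\beta}$ has full support. Each $\dot{\Q}_{1,\beta} = \text{Club}(S_\beta)$ consists of closed bounded subsets of $\kappa$ avoiding $S_\beta$, ordered by end-extension. The stationary sets $S_\beta$ are read off from the $\Q_0 = \text{Add}(\kappa,\mu)$-generic, say $\xi \in S_\beta$ iff the generic assigns $\beta$ to $\xi$ at the relevant coordinate; the precise coding does not matter. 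What matters is only the following property: a condition $p \in \Q_0$ whose domain does not yet mention $\xi$ can be extended so as to put $\xi$ outside every $S_\beta$ with $\beta < \alpha$. For instance, we can put $\xi$ into $S_\alpha$, or into no $S_\beta$ at all. The point is that only the first $\alpha$ clubs have to be protected, and there is a spare index.

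Let $D$ be the set of conditions $(p, \langle \check{c}_\beta \mid \beta < \alpha\rangle)$ satisfying three requirements. First, each $c_\beta$ is a ground-model closed bounded subset of $\kappa$, so the name is a check name. Second, all the $c_\beta$ share a common maximum $\gamma$. Third, $p$ decides the sets $S_\beta \cap (\gamma+1)$ for all $\beta < \alpha$, and forces $c_\beta \cap S_\beta = \varnothing$.

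\textbf{Density.} Start from an arbitrary $(p, \dot{q})$. Since $\Q_0$ is $\kappa$-closed and $\dot{q}$ names fewer than $\kappa$ many bounded sets, we can build a descending sequence in $\Q_0$ of length $\alpha < \kappa$. This yields $p' \le p$ deciding each $\dot{q}(\beta)$ as some $c'_\beta$, and, after further extension, deciding $S_\beta$ up to some $\gamma$ that is strictly above all the $\max c'_\beta$. We then extend $p'$ so that $\gamma$ itself lies in no $S_\beta$ with $\beta < \alpha$, and set $c_\beta = c'_\beta \cup \{\gamma\}$. Each $c_\beta$ remains closed, because $\gamma$ is a new maximum. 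The resulting condition lies in $D$ and extends $(p, \dot{q})$.

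\textbf{Closure.} Take a descending sequence $(p_i, \langle c^i_\beta\rangle)$ in $D$ of length $\eta < \kappa$, with maxima $\gamma_i$. Let $\delta = \sup_i \gamma_i$, which we may assume is a limit of the $\gamma_i$ and not attained, since otherwise the sequence stabilises and there is nothing to do. Let $p = \bigcup_i p_i$, which is a condition by $\kappa$-closure of $\Q_0$. The sets $S_\beta \cap \delta$ are decided by $p$, but $\delta$ itself is not in the domain of $p$. We extend $p$ to put $\delta$ outside all $S_\beta$ with $\beta < \alpha$, and set $c_\beta = \bigcup_i c^i_\beta \cup \{\delta\}$. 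Each $c_\beta$ is closed, since its only new limit point is $\delta$, and it is disjoint from $S_\beta$. This gives a lower bound in $D$.

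The main obstacle is exactly this limit step: we must ensure that the new limit point $\delta$ is not forced into some $S_\beta$. This is why the common maximum and the ``decide-up-to-$\gamma$'' clause are built into $D$. Those clauses guarantee that $\delta$ is still free in $p$, and that a single choice for $\delta$ works simultaneously for all $\beta < \alpha$. That simultaneous choice is possible precisely because $\alpha < \mu$ leaves an index not being protected.
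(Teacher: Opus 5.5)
Your strategy is the right one: a dense set of conditions whose second coordinate is a check name with a common top point $\gamma$, closed under limits because the new limit point can be placed outside every $S_\beta$ with $\beta<\alpha$ using a spare index $\geq\alpha$. You are also right to read the statement as being about $\Q_0\ast(\dot{\Q}_1\restriction\alpha)$. However, the set $D$ you actually define is not $\kappa$-closed, and it fails at exactly the step you identify as the crux. Your third clause only requires that $p$ decide $S_\beta\cap(\gamma+1)$. It puts no bound on what $p$ knows above $\gamma$, so the claim that $\delta$ is not in the domain of $\bigcup_i p_i$ does not follow, and ``those clauses guarantee that $\delta$ is still free in $p$'' is false as written.

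Here is a concrete counterexample. Let $\delta<\kappa$ have cofinality $\omega$, fix $\gamma_0<\gamma_1<\cdots$ cofinal in $\delta$, and fix some $\beta_0<\alpha$. Take $p_0\in\Q_0$ deciding $S_\beta\cap(\delta+1)$ for all $\beta<\alpha$ so that every $\xi<\delta$ lies in none of these $S_\beta$, while $\delta\in S_{\beta_0}$. Put $c^n_\beta=\{\gamma_0,\dots,\gamma_n\}$. Then each $(p_0,\langle\check c^n_\beta\mid\beta<\alpha\rangle)$ lies in $D$ with top point $\gamma_n$, and these conditions form a descending $\omega$-sequence. A lower bound would force the generic club at coordinate $\beta_0$ to contain $\{\gamma_n\mid n<\omega\}$, and hence its limit point $\delta$. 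It would also force $\delta\in S_{\beta_0}$. So there is no lower bound even in the whole iteration, and $D$ is not even $\omega_1$-closed.

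The repair is to make the third clause two-sided: also require that $p$ carries no information about any ordinal above $\gamma$. In the $\kappa\times\mu$ coding this means $\mathrm{dom}(p)\subseteq(\gamma+1)\times\mu$. Your density argument survives if you choose $\gamma$ strictly above every ordinal mentioned by $p'$, as well as above all $\max c'_\beta$, and extend $p'$ only on ordinals $\leq\gamma$. In the closure step, take a chain whose top points $\gamma_i$ do not stabilise. The union of its first coordinates then mentions only ordinals below $\delta$, so $\delta$ really is free, and your spare-index choice produces the lower bound.

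One minor point: when $\sup_i\gamma_i$ is attained, only the second coordinates stabilise. The $p_i$ may keep shrinking, so you still take $\bigcup_i p_i$ as the first coordinate; this is harmless.
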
 

Now, let us force the universe to be well-orderable again, and call this new model $N$. Recall by Theorem \ref{svc-equivalences}, we can always do this. 

\begin{prop}\label{collapsed}
    $\kappa$ has been collapsed in $N$. 
\end{prop}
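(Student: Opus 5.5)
The plan is to apply Proposition \ref{collapse} with the clubs supplied by Theorem \ref{not-fodor}. By Proposition \ref{not-collapsed}, $\kappa$ is still a cardinal in $M$. We also need $\kappa$ to be regular in $M$. Regularity should follow from the same argument: any cofinal map from some $\gamma<\kappa$ into $\kappa$ in $M$ is coded by a set of ordinals. Lemma \ref{4.2} therefore places it in an extension by $\Q_0 \ast \dot{\Q}_1\restriction\alpha$ for some $\alpha<\mu$. By Lemma \ref{4.3} this forcing has a $\kappa$-closed dense subset, so it adds no new short sequences of ordinals, and the map already lies in the ground model $V$, where $\kappa$ is regular.

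Next, work in $M$. By Theorem \ref{not-fodor}, each $S_\alpha$ for $\alpha<\mu$ contains a club $C_\alpha$, and $\bigcap_{\alpha<\mu} S_\alpha=\varnothing$. Hence $\bigcap_{\alpha<\mu} C_\alpha=\varnothing$.

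We cannot choose the $C_\alpha$ uniformly in $M$, since choice fails there. This is exactly where passing to $N$ helps. In $N$ choice holds, so we can choose for every $\alpha<\mu$ a club $C_\alpha\subseteq S_\alpha$ with $C_\alpha\in M$. Each $C_\alpha$ remains closed and unbounded in $\kappa$ in $N$, because both properties are absolute between transitive models containing $C_\alpha$. The sequence $\langle C_\alpha \mid \alpha<\mu\rangle\in N$ is therefore a family of $\mu<\kappa$ clubs of $\kappa$ with empty intersection.

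By Proposition \ref{collapse} applied with the regular cardinal $\kappa$ of $M$ and the extension $N$, $\kappa$ is not regular in $N$. In $N\models\ZFC$ every successor cardinal is regular.

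To conclude that $\kappa$ is actually collapsed, I would first try to use the hypothesis that $\kappa$ is a successor cardinal, as in the second clause of Proposition \ref{collapse}. If the setup only provides a regular limit $\kappa$, the literal conclusion is that $\kappa$ is no longer a regular cardinal in $N$, and the statement should be read in that sense.

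The main obstacle is confirming that $\kappa$ is regular in $M$, so that Proposition \ref{collapse} applies. This requires checking that the $\kappa$-closure from Lemma \ref{4.3} really transfers to every set of ordinals in $M$ via Lemma \ref{4.2}. The rest is a direct combination of Theorem \ref{not-fodor} and choice in $N$.
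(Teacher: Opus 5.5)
Your proposal is correct and is essentially the paper's argument: the clubs inside each $S_\alpha$ given by Theorem \ref{not-fodor} are assembled, using the well-order of $N$, into a sequence of $\mu<\kappa$ clubs with empty intersection, and Proposition \ref{collapse} is then applied with $\kappa$ a successor cardinal (which is evidently what the paper's ``limit ordinal'' remark is meant to say). Your extra check that $\kappa$ stays regular in $M$ via Lemmas \ref{4.2} and \ref{4.3} is sound, though for successor $\kappa$ it is not needed: a successor cardinal of $V$ that survived as a cardinal in $N\models\ZFC$ would be a successor cardinal of $N$, hence regular there.
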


\begin{proof}
    In $M$, each $S_\alpha$ contains a club, since the support $E = \{\alpha\}$ witnesses the generic club for $\Q_{1,\alpha}$ has a symmetric name. Additionally, $\bigcap_{\alpha < \mu} S_\alpha = \varnothing$ by definition. If we  well-ordering the universe, we now have a sequence of $< \kappa$ clubs with empty intersection, so as we picked $\kappa$ to be a limit ordinal, by Proposition \ref{collapse}, $\kappa$ has been collapsed.
\end{proof}

\section{The modal logic of symmetric extensions}

\subsection{The modal logic of forcing}
We can consider all the possible forcing extensions of a model of $\ZFC$, as a multiverse, with forcing acting as an `accessibility relation' between them. To study this multiverse, it can be helpful to use concepts from modal logic.  

Modal logic adds two modal operators $\square$ and $\lozenge$, where $\square \varphi$ expresses that $\varphi$ is necessary, and $\lozenge \varphi$ that $\varphi$ is possible. In the context of a forcing multiverse, $\varphi$ being necessary corresponds to $\varphi$ being true in every forcing extension, and $\varphi$ being possible to $\varphi$ being true in some forcing extension. Hamkins and Löwe gave an exact categorisation of the modal logic of forcing in \cite{Hamkins-modal-logic}. 

\begin{theorem}\label{modal-forcing}
    The modal logic of forcing is exactly $\Sf$. 
\end{theorem}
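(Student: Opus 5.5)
This is the Hamkins--Löwe theorem, which the paper cites from \cite{Hamkins-modal-logic}. The plan is the standard two-part argument: soundness, that every theorem of $\Sf$ is a valid principle of forcing, and completeness, that nothing beyond $\Sf$ is valid. We interpret $\square\varphi$ as ``$\varphi$ holds in every forcing extension'' and $\lozenge\varphi$ as ``$\varphi$ holds in some forcing extension''. These are first-order expressible in the ground model via the forcing relation, so any modal assertion translates uniformly into a set-theoretic sentence.

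\emph{Soundness.} We check each axiom of $\Sf$ under every substitution of set-theoretic sentences for propositional variables.
\begin{itemize}
\item Axiom K and the necessitation rule hold because each forcing extension is a model of $\ZFC$ in which every $\ZFC$-theorem holds.
\item Axiom T holds by using trivial forcing.
\item Axiom 4 holds because an iteration $\p\ast\dot{\Q}$ is again a forcing, so an extension of an extension is an extension of the ground model.
\item Axiom .2 ($\lozenge\square\varphi\to\square\lozenge\varphi$) is the directedness step. Suppose $\p$ forces $\square\varphi$, and let $V[H]$ be any extension by $\Q$. Force over $V[H]$ with $\p$ to obtain $V[H][G]=V[G\times H]$. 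This model is also an extension of $V[G]$, so $\varphi$ holds there. Hence $\lozenge\varphi$ holds in $V[H]$.
\end{itemize}

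\emph{Completeness.} We use the modal-logic fact that $\Sf$ is complete for finite pre-Boolean algebras. Equivalently, it suffices to produce, over some model of $\ZFC$ (for example $L$), an arbitrarily large finite independent family of buttons $b_0,\dots,b_{n-1}$ and switches $s_0,\dots,s_{m-1}$.
\begin{itemize}
\item For the buttons, take $b_i$ to be ``$\aleph_{i+1}^L$ is collapsed''. Each is unpushed in $L$, pushable by a collapse forcing, and stays pushed forever. Each can be pushed without pushing the others, by collapsing only the relevant cardinal.
\item For the switches, take $s_j$ to be ``$\GCH$ holds at $\aleph_{\omega+j+1}$''. Each can be toggled by forcing at that cardinal. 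This forcing is sufficiently closed that it neither collapses the small cardinals nor disturbs the other switches.
\end{itemize}

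Next we encode a finite pre-Boolean algebra frame. Each cluster corresponds to a set of pushed buttons, and points within a cluster correspond to switch patterns. To each world $w$ we assign a sentence $\Phi_w$ specifying exactly which buttons are pushed and which switch pattern holds. A propositional variable $p$ is then translated to the disjunction of $\Phi_w$ over the worlds where $p$ holds. An induction on formulas shows that the translation of $\psi$ holds in an extension exactly when $\psi$ holds at the corresponding world of the frame. So any modal formula not in $\Sf$, which fails on some such frame, has a false substitution instance at $L$.

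The main obstacle is the independence verification. We must check that each button push and switch toggle leaves every other control unchanged, and that every combination is realised by a single forcing, so that accessibility in the frame exactly matches forcing accessibility among the pattern sentences. This requires careful choice of cardinals so the relevant forcings commute appropriately. The rest is routine bookkeeping.
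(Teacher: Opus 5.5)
Your architecture matches the Hamkins--L\"owe proof that the paper cites: soundness axiom by axiom (your product-forcing argument for (.2) is correct), and completeness via finite pre-Boolean algebras labelled by independent buttons and switches. The gap is in the step you set aside as ``routine bookkeeping'', namely that your buttons $b_i=$ ``$\aleph_{i+1}^L$ is collapsed'' are independent. Independence has to hold \emph{necessarily}, i.e.\ in every forcing extension of $L$, because the labelling must verify $\Phi_w\to\lozenge\Phi_u$ in every extension satisfying $\Phi_w$.

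In such extensions, ``collapse only the relevant cardinal'' is not what the natural forcing does. A ${<}\lambda$-closed collapse of $\aleph_{i+1}^L$ to its predecessor $\lambda$ also collapses $2^{<\lambda}$, and an earlier forcing may have pushed $2^{<\lambda}$ above other $L$-cardinals. Concretely, let $L[G]$ add $\aleph_3^L$ Cohen reals, so all cardinals survive and $2^{\aleph_0}=\aleph_3^L$. The Lévy collapse $\mathrm{Col}(\omega_1,\omega_2)$ of $L[G]$ projects onto $\mathrm{Add}(\omega_1,1)$ (take parities of values). That poset collapses $2^{\aleph_0}$ to $\aleph_1$, so $\aleph_3^L$ dies and $b_2$ is pushed together with $b_1$. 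A pushed button can never be unpushed, so unlike a switch this cannot be repaired by further forcing. In this particular model you can rescue the step with $\mathrm{Col}(\omega_1,\omega_2)^L$ and Easton's lemma. For an arbitrary extension, however, you need an actual argument that some forcing collapses exactly $\aleph_{i+1}^L$ and nothing else among the relevant cardinals, and none is given.

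The paper's account sidesteps this by using the Hamkins--L\"owe buttons $b_n=$ ``$S_n$ is not stationary'', where $\langle S_n\mid n<\omega\rangle$ is an $L$-partition of $\omega_1$ into stationary sets. The switches are $2^{\aleph_n}=\aleph_{n+1}$ for $n\geq 2$. Suppose $S_n$ is still stationary in some extension. Then shooting a club through $\omega_1\setminus S_n$ preserves $\omega_1$ and the stationarity of every other stationary $S_m$ \cite[23.6, 23.8]{Jech2003}. (If $\omega_1\setminus S_n$ is already nonstationary, every other button is pushed and you may simply collapse $\omega_1$.) Any collateral damage to the continuum lands only on the switches. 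These can then be reset by $\sigma$-closed forcing above $\aleph_1$, which preserves all stationary subsets of $\omega_1$ and so leaves every button as it was. That asymmetry is exactly what your collapse buttons lack: buttons live on $\omega_1$ and are immune to side effects, which fall only on resettable switches.
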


\begin{definition}\label{s4.2:def}

The axiom system $\Sf$ is as follows: 
\begin{itemize}
        \item (K): $\square (\varphi \rightarrow \psi) \rightarrow (\square \varphi \rightarrow \square\psi)$
        \item (Dual): $\neg \square \varphi \leftrightarrow \square \neg \varphi $
        \item (S): $\square \varphi \rightarrow \varphi $
        \item (4): $\square \varphi \rightarrow \square \square \varphi $
        \item (.2): $\lozenge \square \varphi \rightarrow \square \lozenge \varphi $
\end{itemize}

\end{definition}

The proof of Theorem \ref{modal-forcing} relies on the existence of an infinite independent system of buttons and switches. 

\begin{definition}
    A \emph{button} is a statement  $\varphi$ which is (necessarily)\footnote{Over $\Sf$, necessarily possibly necessary is equivalent to possibly necessary. In weaker systems, this need not be the case.} possibly necessary.
\end{definition}

 The statement $V \neq L$ is a button. The button is \emph{pushed} when it becomes necessary, else it is \emph{unpushed}. So, once the button is pushed, $\varphi$ holds in any further forcing extension. We say a button $\varphi$ is a \emph{pure button} if when $\varphi$ is true, it is necessarily true, i.e. $\varphi \rightarrow \square \varphi$\footnote{For example, $\omega_1^L \neq \omega_1$ is a pure button, however $(\omega_1^L \neq \omega_1 )\lor (\CH$ holds) is a button, but not a pure button.}.  

\begin{definition}
    A \emph{switch} is a statement $\varphi$ such that both $\varphi$ and $\neg\varphi$ are necessarily possible.
\end{definition}

This means we can turn a switch on by moving into a generic extension where it is true, and then turn it off by moving into a generic extension where it is false. For example, $\CH$ is a switch. 

\begin{definition}
    A system of buttons and switches is \emph{independent} over a particular model if (i) all of the buttons are unpushed in that model, and (ii), necessarily, i.e. in any forcing extension, any of the buttons can be pressed, and any of the switches can be switched without affecting any other button or switch. 
\end{definition}

We can exhibit a an independent system of buttons and switches over $V=L$ as follows; take a countable partition of $\omega_1$ into stationary sets $S_n$, and define buttons $b_n := \text{`the set }S_n\text{ is not stationary'}$. This set of buttons is used in \cite{Hamkins-modal-logic}, for independence see \cite[23.8, 23.6]{Jech2003}. Then we can define our switches as $s_n := \text{`}2^{\aleph_n} = \aleph_{n+1} \text{',  for } n \geqslant 2.$

Hamkins and Löwe also observe that $\omega_1^L$ being collapsed is equivalent to the infinite  conjugation of each $b_n$ defined above being pushed. 

\subsection{Choice-switches}

A natural generalisation of this work is to ask what the modal logic of symmetric extensions is. This question was answered by Block and Löwe in \cite{LB}. 

\begin{theorem}\label{modal-symmetric}
    The modal logic of symmetric extensions is exactly $\Sf$. 
\end{theorem}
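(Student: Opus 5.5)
Theorem \ref{modal-symmetric} is the result of Block and Löwe, so the proof I would give follows the Hamkins--Löwe template for Theorem \ref{modal-forcing}. It splits into a soundness half and a completeness half.

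\emph{Soundness.} I would check that every axiom of $\Sf$ is valid when $\square\varphi$ is read as ``$\varphi$ holds in every symmetric extension'' and the accessibility relation is ``is a symmetric extension of''.
\begin{itemize}
\item (K) and (Dual) are immediate.
\item (S) holds because the improper filter makes each model a trivial symmetric extension of itself.
\item (4) needs transitivity: a symmetric extension of a symmetric extension is again a symmetric extension of the ground model. I would not build an iterated symmetric system. Instead I would use Theorem \ref{svc-equivalences}.
\begin{itemize}
\item Symmetric extensions of models of $\ZFC$ satisfy $\SVC$, and models of $\SVC$ are exactly those of the form $V(x)$ with $V\models\ZFC$.
\item If $M = V(x)$ and $M' = M(y)$ with the relevant genericity, then $M' = V(\langle x,y\rangle)$, and $M'$ sits inside a single forcing extension of $V$ by the two-step iteration.
\item Grigorieff-style arguments then show $M'$ is a symmetric extension of $V$.
\end{itemize}
\item (.2) is directedness. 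Given two symmetric extensions $M_1, M_2$ of $V$, each sits inside a forcing extension $V[G_i]$, and each of these can be forced to restore $\AC$. By the Hamkins--Löwe directedness for forcing, $V[G_1]$ and $V[G_2]$ have a common forcing extension $V[H]$, for example by the product. $V[H]$ is then a trivial symmetric extension of both $M_1$ and $M_2$: it is a forcing extension of each $M_i$, by the quotient, which is a symmetric system with the improper filter.
\end{itemize}
The step I expect to need the most care in soundness is checking that the quotient forcing over $M_i$ is actually in $M_i$. For this I would invoke the second clause of Theorem \ref{svc-equivalences}: some $\p\in M_i$ forces $\AC$. I would then absorb that extension into a larger collapse.

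\emph{Completeness.} The plan is to show that any model of $\ZFC$ admits an independent system of buttons and switches for the symmetric multiverse; Hamkins--Löwe's Kripke-frame argument then goes through verbatim. Over $L$ I would take the buttons $b_n$ (``$S_n$ is not stationary'') and the switches $s_n$ (``$2^{\aleph_n}=\aleph_{n+1}$'').
\begin{itemize}
\item The switches remain switches, because any symmetric extension can first be forced to satisfy $\AC$ and then have the relevant $\GCH$ instance adjusted by forcing.
\item The buttons need two checks.
\begin{itemize}
\item Non-stationarity of $S_n$ is upward absolute to symmetric extensions: a club remains a club.
\item Pushing $b_n$ does not push $b_m$. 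This holds because the club-shooting forcing over $L$ preserves the stationarity of the other $S_m$; that forcing is a genuine forcing, hence a symmetric extension.
\end{itemize}
\end{itemize}

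\emph{Main obstacle.} The difficulty is that in choiceless extensions ``stationary'' can behave badly, as the Fodor-failure model of Theorem \ref{not-fodor} and Proposition \ref{collapsed} show. I must ensure that the labelling of the Kripke frame uses only statements whose truth in intermediate choiceless models agrees with their truth in the $\ZFC$ models. My first attempt would be to relativise each button to ``$S_n$ is non-stationary in every $\ZFC$ extension'', that is, $\square$-closure under forcing $\AC$. This is a pure button, and it sidesteps the pathology.
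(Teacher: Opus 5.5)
The paper gives no argument of its own here. It cites Block--Löwe and remarks that the Hamkins--Löwe buttons and switches can be reused. Your outline follows that template, and the soundness half is an acceptable sketch. In (.2) you should take the second generic mutually generic over the first; this is harmless, because $M\models\square\varphi$ is decided by the forcing relation over $V$.

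The gap is in completeness. Independence is a \emph{necessity} condition: the labelling conditions must hold in every symmetric extension of $L$, including choiceless ones, and you check them only at $L$. For the unmodified system they fail.

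\begin{itemize}
\item Let $M$ be Karagila's model over $L$ with $\kappa=\omega_1$ and $\mu=\omega$. By Lemmas \ref{4.2} and \ref{4.3}, every club of $M$ lies in a $\sigma$-closed forcing extension of $L$. So every $S_n$ is still stationary in $M$, and no $b_n$ is pushed.
\item $\mathcal{P}(\omega_1)^M$ is not well-orderable in $M$. Otherwise $M$, in which $\omega_1$ is regular, could choose clubs inside the sets of Theorem \ref{not-fodor} and intersect them. Hence every $s_n$ is off in $M$.
\item Let $N\supseteq M$ be any world in which some $s_n$ holds. Then $N$ well-orders the reals and $\mathcal{P}(\omega_1^L)^M$. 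So $N$ can pick countably many clubs of $M$ with empty intersection while $\omega_1^N$ is regular. That is only possible if $\omega_1^L$ is countable in $N$, and then every $b_n$ is pushed.
\end{itemize}

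So from a world labelled ``no button pushed, all switches off'', the node ``no button pushed, $s_2$ on'' is unreachable, and the labelling breaks. This is the paper's own argument against $(\star)$, and it applies equally to the $\GCH$ switches, so the bare citation does not settle the point either. The culprit is not stationarity misbehaving (in $M$ the $S_n$ are stationary). It is that your switches need choice.

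Your relativised buttons $b_n'$ do dispose of $M$, since every $b_n'$ is already pushed there. Their purity can be checked with Grigorieff's theorem: an $\AC$ forcing extension of a symmetric extension of $W'=L(x)$ is a forcing extension of $L$ containing $x$, hence an $\AC$ forcing extension of $W'$.

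But the plan ``restore choice, then operate as over $L$'' now needs a statement the proposal never addresses:

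$(\ast)$ For every symmetric extension $W'$ of $L$, and every finite $F\subseteq\omega$ with $W'\models\neg b_n'$ for all $n\in F$, there is a single $\AC$ forcing extension of $W'$ in which all $S_n$, $n\in F$, are simultaneously stationary.

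The hypothesis $W'\models\neg b_n'$ only provides such an extension for each $n$ separately, and nothing in the proposal combines them. The switches, which you leave unrelativised, must be flipped from a choiceless $W'$ by the same route, so they depend on $(\ast)$ as well. Until $(\ast)$ is proved, or the system is replaced by one that can be operated without first restoring choice, the claim that the Kripke-frame argument goes through verbatim is unsupported.
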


The proof is similar to the proof of Theorem \ref{modal-forcing}, which if we think about symmetric extensions as a generalisation of forcing extensions is unsurprising. Indeed, the same system of buttons and switches can used in proof of Theorem \ref{modal-symmetric} as in \ref{modal-forcing}. A natural question therefore is if taking symmetric extensions is a generalisation of taking forcing extensions, do we have new examples of pure\footnote{We do not want to consider a trivial addition like `$(\omega_1^L \neq \omega_1)$ or countable choice holds' as a new example of a button.} buttons? Do we have new examples of switches? Quickly, we can answer no, and yes, respectively. For brevity, we will say \emph{forcing switch} for a statement which is a button in the modal logic of forcing, and \emph{symmetric switch} for a statement which is a switch in the modal logic of symmetric extensions. 

\begin{prop}
    Every $\varphi$ which is a pure symmetric button, was already a forcing button.
\end{prop}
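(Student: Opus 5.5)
Let $\varphi$ be a pure symmetric button, and let $W\models\ZFC$ be a world of the forcing multiverse. Since every forcing extension is a symmetric extension (take the improper filter), it suffices to show that $\varphi$ is possibly necessary in the forcing sense over $W$, and then apply this at every forcing extension of $W$. The plan is to push $\varphi$ symmetrically and then restore choice by forcing, using purity to carry $\varphi$ across each step.

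First, since $\varphi$ is a symmetric button, there is a symmetric extension $M$ of $W$ with $M\models\varphi$. If $\varphi$ is only possibly-necessary we may take $M\models\square_{\mathrm{sym}}\varphi$; purity gives this anyway, since $\varphi\rightarrow\square\varphi$ holds in the symmetric multiverse. Next, $M$ is a symmetric extension of a model of $\ZFC$, so by Theorem \ref{svc-equivalences} it satisfies $\SVC$, and there is a forcing $\mathbb{Q}\in M$ with $\mathds{1}_{\mathbb{Q}}\Vdash\AC$. Let $H$ be $M$-generic for $\mathbb{Q}$. Then $M[H]$ is a forcing, hence symmetric, extension of $M$, so $M[H]\models\varphi$ by purity.

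The key step is to see that $M[H]$ is itself a forcing extension of $W$. The model $M$ is $\HS^G$ for a $W$-generic $G\subseteq\p$, so $M\subseteq W[G]$. By the fourth clause of Theorem \ref{svc-equivalences}, $M=W(x)$ for some $x$. I would then use the standard fact that a $\ZFC$ model of the form $W(x)[H]$, obtained as a set-generic extension of a symmetric extension of $W$, is a set-generic extension of $W$. In practice this is usually handled by Grigorieff/Usuba-style arguments: $M[H]$ is a $\ZFC$ model lying between $W$ and some generic extension $W[G][H']$, and by the intermediate model theorem it is then a forcing extension of $W$. Granting this, $M[H]$ is a forcing extension of $W$ satisfying $\varphi$. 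Moreover, every forcing extension of $M[H]$ is a symmetric extension of $M[H]$, so purity gives $\varphi$ there as well. Hence $\varphi$ is necessary in the forcing sense over $M[H]$, which means $\varphi$ is forcing-possibly-necessary over $W$. Applying the same argument at every forcing extension of $W$ shows that $\varphi$ is necessarily possibly necessary, that is, a forcing button.

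The main obstacle is this key step, identifying $M[H]$ as a forcing extension of $W$, since $H$ is generic over $M$ and not over $W[G]$. My first attempt would be to choose $\mathbb{Q}$ as a collapse of the witness set $S$ from $\SVC$, and to check that $\mathbb{Q}$ can be replaced by a forcing in $W$, so that $M[H]$ is contained in a $W$-generic extension. The intermediate model theorem would then finish the argument.
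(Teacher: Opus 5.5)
Your overall architecture is sound, and it is more complete than the paper's proof. The paper's proof only records that every forcing extension is a symmetric extension, so $\square_{\mathrm{sym}}\varphi$ implies forcing-$\square\varphi$. It never explains why a world where $\varphi$ is pushed can be reached from $W$ by \emph{forcing} rather than by a symmetric extension, and that is exactly your key step. However, you leave that step open (``granting this''), and the repair you sketch would fail: you cannot in general replace $\mathbb{Q}$ by a forcing in $W$. Take $M$ to be the basic Cohen model over $W$, $\mathbb{Q}\in W$, and $H$ generic over $W[G]$. By the product lemma $G$ is $W[H]$-generic. So $M[H]$ is contained in the Cohen symmetric model built over $W[H]$ with the same symmetric system. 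In that model the set of Cohen reals, which already lies in $M$, is not well-orderable, so $M[H]\not\models\AC$. No ground-model forcing restores choice here.

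The missing idea is much simpler. You only need one suitable extension, so choose $H$ to be $\mathbb{Q}$-generic over $W[G]$, not merely over $M$. Since $M\subseteq W[G]$, every dense subset of $\mathbb{Q}$ lying in $M$ lies in $W[G]$. Hence $H$ is $M$-generic and $M[H]\models\ZFC$. Take any $\p$-name $\dot{\mathbb{Q}}\in W$ for $\mathbb{Q}$. Then $W\subseteq M[H]\subseteq W[G][H]=W[G*H]$ with $\p*\dot{\mathbb{Q}}\in W$. Grigorieff's intermediate model theorem (cf.\ Jech's \emph{Set Theory}, Chapter 15), applied to the $\ZFC$ model $M[H]$, shows that $M[H]$ is a set-forcing extension of $W$, and the rest of your argument then goes through. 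Incidentally, once this step is in place purity does no work. If $M\models\square_{\mathrm{sym}}\varphi$, any forcing extension $M[H][K]$ of $M[H]$ is a two-step forcing extension $M[H*K]$ of $M$, hence satisfies $\varphi$. So $M[H]$ already satisfies forcing-$\square\varphi$.
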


\begin{proof}
    If $\varphi$ holds in every symmetric extension of a model, it necessarily holds in every forcing extension. 
\end{proof}

\begin{prop}
    There exist symmetric switches $\varphi$ which were not forcing switches.
\end{prop}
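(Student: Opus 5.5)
The natural candidate is $\AC$ itself. Let $\varphi$ be the sentence ``the axiom of choice holds''. I will show that $\varphi$ is a switch in the modal logic of symmetric extensions, but not in the modal logic of forcing.

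The second half is immediate. Over a model of $\ZFC$, every forcing extension again satisfies $\ZFC$. So $\neg\AC$ is never possible in the forcing multiverse, and $\varphi$ is necessary there rather than a switch.

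For the first half I must show that, in the symmetric multiverse, both $\AC$ and $\neg\AC$ are necessarily possible. Work over a model $W$ in the symmetric multiverse, so $W$ is a symmetric extension (or iterate) of a model of $\ZFC$. By Theorem \ref{svc-equivalences}, $W\models\SVC$. The argument has two steps.

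\begin{enumerate}
\item \textbf{$\AC$ is possible.} By Theorem \ref{svc-equivalences} there is a forcing $\p\in W$ with $\mathds{1}_\p\Vdash\AC$. A forcing extension is a symmetric extension for the improper filter, so $\lozenge\AC$ holds at $W$.
\item \textbf{$\neg\AC$ is possible.} Pass first to such a $\ZFC$ extension $W[G]$. Over $W[G]$, take the standard symmetric extension adding countably many Cohen reals with finite-support symmetry (Cohen's first model), in which $\AC$ fails. This yields $\lozenge\neg\AC$ at $W$.
\end{enumerate}

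The main obstacle is in step (2): the accessibility relation must be transitive. That is, a symmetric extension of a forcing extension of $W$ must again count as a symmetric extension of $W$, or at least the relation must be taken as its transitive closure. I would handle this in one of two ways.

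\begin{itemize}
\item Use the fact that the Block--Löwe framework takes the reflexive transitive closure, as is needed for (4) to hold in Theorem \ref{modal-symmetric}.
\item Argue directly. The final model is $V'(x)$ for some inner $\ZFC$ model $V'$ contained in $W$'s ground, so by Theorem \ref{svc-equivalences} it is a symmetric extension of that ground.
\end{itemize}

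Since $W$ was arbitrary, both $\lozenge\AC$ and $\lozenge\neg\AC$ hold necessarily. Hence $\AC$ is a symmetric switch that was not a forcing switch.
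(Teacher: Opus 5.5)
Your proposal is correct and is essentially the paper's argument. The paper simply observes that $\AC$ becomes a switch in the $\ZF+\SVC$ setting; you supply the details, namely forceability of $\AC$ via Theorem \ref{svc-equivalences} and preservation of $\AC$ under forcing for the non-switch half.

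One simplification: Cohen's finite-support symmetric system already produces an infinite Dedekind-finite set of reals over any $\ZF$ ground $W$. So you can get $\lozenge\neg\AC$ at $W$ directly and drop both the detour through $W[G]$ and the transitivity issue. Your second fix is the weaker of the two anyway, since Theorem \ref{svc-equivalences} only makes the final model a symmetric extension of some $\ZFC$ inner model, not of $W$ itself.
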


\begin{proof}
    The axiom of choice is now a switch!\footnote{Assuming of course, we are working in a model of $\ZF + \SVC$, but this is the standard setting for studying the modal logic of symmetric extensions. It is unclear what the modal logic of symmetric extensions is over models of $\ZF + \neg\SVC$, see Question \ref{notsvc:question}.}  
\end{proof}

Recall here that being a symmetric extension of a model of $\ZFC$ is equivalent to being able to set force to restore choice. We will now focus on switches which `affect' the axiom of choice. 

\begin{definition}\label{choice-switch:def}
    We say $\varphi$ is a \emph{choice-switch} if necessarily, $\varphi$ and $\neg\varphi$ cannot both hold in a model where the same fragments of the axiom of choice are true. 
\end{definition}

For example, `the axiom of choice holds' is a choice-switch. More interesting examples include `countable choice holds', and `there exists an amorphous set'. However, `$\CH$ holds' is not a choice-switch, we can force $\neg\CH$ by violating choice, but it is not necessary to do so. 

\begin{question}
    Can we construct an infinite independent system of buttons and switches, using choice-switches? 
\end{question}

We have nice examples of independent systems of choice-switches in the literature already. In particular, Ryan-Smith's construction of models without extendible choice is built using a symmetric extension, and allows us to vary $\EC_\kappa$ at many $\kappa$ independently. 

\begin{definition}
    We say $\EC_\alpha$ holds if when $X = \{A_\beta \mid \beta < \alpha \}$ has choice functions for all `initial segments' $\{A_\beta \mid \beta < \lambda \}$, then $X$ has a choice function.
\end{definition}

For further details relating to extendible choice, see the appendix. 

\begin{theorem}[See Theorem \ref{extendible:theorem}]
     Assume $\GCH$ and let $\mathcal{C}$ be a class of infinite regular cardinals such that whenever $\kappa$ is inaccessible and $\kappa \cap \mathcal{C}$ is unbounded below $\kappa$, we have $\kappa \in \mathcal{C}$. Then there is a symmetric extension $M$ such that $M \models (\forall \kappa) \EC_\kappa \leftrightarrow \text{cf}(\kappa) \notin \mathcal{C}$.
 \end{theorem}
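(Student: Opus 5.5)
The theorem is Ryan-Smith's, and the paper defers it to the appendix (Theorem \ref{extendible:theorem}). My plan is to reconstruct it as a single symmetric extension whose pieces are indexed by $\kappa \in \mathcal{C}$.

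\textbf{Construction.} For each regular $\kappa \in \mathcal{C}$ I will add, by a forcing that is $<\kappa$-closed above its own level, a family of sets indexed by a tree of height $\kappa$. A natural choice is a $\kappa$-sequence of sets $A^\kappa_\beta$, $\beta<\kappa$, each containing many Cohen-generic subsets of $\kappa$.
\begin{itemize}
\item The automorphism group permutes the generics inside each $A^\kappa_\beta$.
\item The filter is generated by fixing all elements of $A^\kappa_\beta$ for $\beta$ in a bounded subset of $\kappa$, for each of a small set of $\kappa$'s.
\item The forcing is an Easton-support product over $\kappa \in \mathcal{C}$, and the filter uses the same Easton-type supports.
\end{itemize}
$\GCH$ ensures that this product preserves cardinals and cofinalities, in the usual Easton way.

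\textbf{Failure of $\EC_\lambda$ when $\mathrm{cf}(\lambda)\in\mathcal{C}$.} Take $\kappa=\mathrm{cf}(\lambda)\in\mathcal{C}$. The family $\{A^\kappa_\beta \mid \beta<\kappa\}$, reindexed along a cofinal sequence in $\lambda$, has a choice function on every proper initial segment, because a bounded support is allowed in the filter. It has no choice function on the whole family: any name for such a function is fixed by some bounded support, so some automorphism acting above that support moves the chosen element while fixing the name.

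\textbf{$\EC_\lambda$ when $\mathrm{cf}(\lambda)\notin\mathcal{C}$.} This is the main obstacle. Let $\kappa=\mathrm{cf}(\lambda)$, and suppose $X$ has choice functions on all initial segments. Then a hereditarily symmetric name for $X$ is supported by finitely many (or Easton-small) levels $\kappa'\in\mathcal{C}$. The plan has three steps.
\begin{enumerate}
\item Split the levels into those below $\kappa$ and those above $\kappa$.
\item Show the forcing above $\kappa$ is $\kappa^+$-distributive in the relevant sense. Then a $\kappa$-sequence of choice functions on initial segments, each with a symmetric name, can be amalgamated, since the supports for levels above $\kappa$ are $<\kappa'$-sized and their union stays small.
\item For levels below $\kappa$, use the closure hypothesis on $\mathcal{C}$. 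If $\kappa$ is inaccessible and not in $\mathcal{C}$, then $\mathcal{C}\cap\kappa$ is bounded below $\kappa$, so only boundedly many levels are involved below $\kappa$. If $\kappa$ is a successor or singular, the Easton support below $\kappa$ is already bounded. In either case the union of $\kappa$ many supports is still a legitimate support.
\end{enumerate}
This yields a single symmetric name for a choice function on all of $X$.

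Step 3 is exactly where the closure condition on $\mathcal{C}$ at inaccessibles is used. Verifying that the union of $\kappa$ many supports remains in the filter is the delicate step I expect to spend the most effort on.
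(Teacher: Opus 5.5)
Your failure direction is fine. The gap is in Step~3, and it is a real failure, not just a missing detail.

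\textbf{Why Step~3 fails.} The closure hypothesis on $\mathcal{C}$ and the Easton condition only control \emph{which levels} $\mu\in\mathcal{C}\cap\kappa$ appear in a support. They say nothing about the support \emph{inside} a level. At a level $\mu<\kappa$ a support is a bounded subset of $\mu$, and $\kappa$ many of these can be cofinal in $\mu$. So the claim that the union of $\kappa$ many supports is still legitimate is false. (Also, $\kappa=\mathrm{cf}(\lambda)$ is regular, so your ``successor or singular'' case is only the successor case, where boundedness of the set of levels is automatic and irrelevant.)

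The natural repair is to use regularity of $\kappa$ to thin out to cofinally many initial segments that share one support below $\kappa$. This works when there are fewer than $\kappa$ possible lower supports. Under $\GCH$, however, there are exactly $\kappa$ of them when $\kappa=\nu^+$ for a singular limit $\nu$ of $\mathcal{C}$, and then the step genuinely fails in your model. Here is an example.
\begin{itemize}
\item Take $\mathcal{C}=\{\aleph_n\mid 1\le n<\omega\}$, which satisfies the hypothesis. In $V$ take a $<^*$-increasing, $<^*$-cofinal sequence $\langle b_\gamma\mid\gamma<\aleph_{\omega+1}\rangle$ in $\prod_n\aleph_n$; it exists by $\GCH$.
\item Let $X_\gamma$ be the set of functions $g$ whose domain is a final segment $[k,\omega)$ and with $g(n)\in A^{\aleph_n}_{b_\gamma(n)}$ for all $n\ge k$.
\item Initial segments have choice functions. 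Fix $\gamma_0<\aleph_{\omega+1}$. Fixing the blocks $A^{\aleph_n}_\beta$ with $\beta\le b_{\gamma_0}(n)$ at every level is an Easton support. It supports the map sending $\gamma<\gamma_0$ to the $g$ that picks the least-indexed generic in $A^{\aleph_n}_{b_\gamma(n)}$, for $n$ past the point where $b_\gamma$ drops below $b_{\gamma_0}$.
\item The whole family has none. A name for a choice function on all the $X_\gamma$ fixes, at level $\aleph_n$, only blocks below $c(n)$ for some $c\in\prod_n\aleph_n$ in $V$. Take $\gamma$ with $c<^*b_\gamma$. For large $n$, swapping two generics in the unfixed block $A^{\aleph_n}_{b_\gamma(n)}$ preserves the name but moves the chosen value.
\end{itemize}
So $\EC_{\aleph_{\omega+1}}$ fails although $\aleph_{\omega+1}\notin\mathcal{C}$. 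If you use finite level-supports instead, $\EC_\omega$ (that is, $\AC_\omega$) fails for the same $\mathcal{C}$, witnessed by $\langle A^{\aleph_n}_0\mid n\ge1\rangle$.

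\textbf{Comparison with the paper, and a caution.} The paper gives no argument of its own here. It quotes Ryan-Smith's computation of $\mathrm{Spec}_\aleph(M)$ and reads off the $\EC$ pattern from Levy's characterisation: $\neg\EC_\kappa$ holds iff some set has Hartogs number a limit cardinal of cofinality $\mathrm{cf}(\kappa)$.

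Be aware that the obstruction above does not seem to depend on your construction. The same argument appears to go through in $\ZF+\AC_\omega$, starting from arbitrary witnesses $\langle A^n_\beta\mid\beta<\aleph_n\rangle$ to $\neg\EC_{\aleph_n}$:
\begin{itemize}
\item replace $A^n_\beta$ by the set of choice functions on $\langle A^n_{\beta'}\mid\beta'<\beta\rangle$;
\item use $\AC_\omega$ to get choice functions on the initial segments of $\langle X_\gamma\rangle$;
\item a choice function on all the $X_\gamma$ then yields, for some $n$, uniformly chosen choice functions on cofinally many initial segments of the $n$th witness, hence one on the whole witness.
\end{itemize}
This needs only that $M$ still has an $\aleph_{\omega+1}$-scale, for instance if no new $\omega$-sequences of ordinals are added. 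And $\AC_\omega=\EC_\omega$ is demanded by the statement, since $\omega\notin\mathcal{C}$. So before repairing your construction, check the exact closure hypothesis in Ryan-Smith's Theorem~4.10. As quoted here it constrains $\mathcal{C}$ only at inaccessibles, whereas successors of singular limits of $\mathcal{C}$ appear to need a condition as well.
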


However, we need our system of buttons and switches to also be independent from each other, and this is what causes problems when we try the naive construction. Let us define a system of buttons and switches, where $b_n := \text{`the set }S_n\text{ is not stationary'}$, where $\sqcup S_\alpha$ is a countable disjoint partition of $\omega_1$ and $s_n := \text{`}\EC_\kappa$ holds at the $n$th regular cardinal, for $n \geqslant 2$. We will call this system $(\star)$. 

\begin{prop}
    The system $(\star)$ of buttons and switches is not independent. 
\end{prop}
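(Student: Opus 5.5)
The plan is to exhibit a configuration of the system $(\star)$ that independence requires but that cannot be reached: some choice-switch $s_n$ is switched off while every button $b_m$ stays unpushed, and then choice is restored. The tool is the phenomenon of Section 3: a failure of choice can be built so that restoring choice collapses a cardinal. Restoring choice is in turn forced on us, because each $s_n$ must necessarily be switchable back on.

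\textbf{Step 1: switch off an $s_n$ in a collapsing way.} Work over a model where all $b_m$ are unpushed and all $s_n$ hold, for instance a model of $\ZFC$. Pass to a symmetric extension $M$ in the spirit of Karagila's Fodor model from Section 3, combined with the construction behind Theorem~\ref{extendible:theorem}. Take $\kappa=\aleph_1$, so that $\kappa$ is the cardinal carrying the stationary sets $S_m$, and let $\mu<\kappa$ be the relevant index. In $M$ we should then have stationary sets, each containing a club, whose intersection is empty. Such a sequence is itself an instance of $\EC$ failing: the sequence of clubs has choice functions on initial segments but no choice function overall. So $\EC$ fails at the relevant cardinal, and the switch $s_n$ is off.

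\textbf{Step 2: restore choice and collapse $\omega_1$.} By Theorem~\ref{svc-equivalences}, we can force over $M$ to a model $N$ of $\AC$. Independence demands that some such move switches $s_n$ back on without touching the buttons. However, exactly as in Proposition~\ref{collapsed}, Proposition~\ref{collapse} shows that $\kappa=\omega_1^V$ is collapsed in $N$.

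\textbf{Step 3: read off the buttons.} By the observation of Hamkins and Löwe recalled after the definition of independence, collapsing $\omega_1$ (in particular $\omega_1^L$) forces every $b_m$ to be pushed. Indeed, once $\omega_1$ has countable cofinality, no subset of the old $\omega_1$ remains stationary. So switching $s_n$ off and then on necessarily pushed all the buttons, which contradicts clause (ii) of independence.

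\textbf{Main obstacle.} The hard step is Step 1. We need the failure of $\EC$ witnessing $s_n$ to be realised by a Karagila-type club configuration at $\omega_1$ or at the cardinal of the stationary sets. We also need it to be realised without already pushing some $b_m$ in $M$ itself. This requires checking, via Lemmas~\ref{4.2} and~\ref{4.3}, that $\omega_1$ is preserved in $M$ and that the $S_m$ remain stationary there.

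I would first try to show that, to switch $s_n$ at all, any symmetric extension must add such a non-normal club filter sequence, or at least that some reachable extension does so while leaving the buttons untouched. That some such reachable extension exists suffices for non-independence.
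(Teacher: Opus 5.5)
\textbf{The gap is Step 1.} You leave it open as the ``main obstacle'', and it cannot be carried out in the form you describe.

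A Karagila configuration at $\kappa=\omega_1$ has length $\mu<\omega_1$, so $\mu=\omega$. What it witnesses is that the family $\langle\mathcal{K}_\alpha\mid\alpha<\omega\rangle$, with $\mathcal{K}_\alpha$ the set of clubs contained in $S_\alpha$, has no choice function. (It is this family of sets of clubs, not ``the sequence of clubs'', that lacks one.) Finite initial segments trivially have choice functions, so this is a failure of $\EC_\omega$, i.e.\ of countable choice.

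But $\EC_\alpha$ depends only on $\mathrm{cf}(\alpha)$, while the switches $s_n$, $n\geq 2$, concern uncountable regular cardinals. Theorem~\ref{extendible:theorem} with $\mathcal{C}=\{\omega\}$ even gives, from $\GCH$, $\neg\EC_\omega$ together with $\EC_\lambda$ at every uncountable regular $\lambda$. So a configuration at $\omega_1$ switches off no $s_n$.

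If instead you take $\mu=\lambda$, the uncountable regular cardinal of some $s_n$, then $\kappa>\lambda$ gives $\kappa\geq\aleph_2$. Proposition~\ref{collapse} then only tells you that $\kappa$ stops being regular once choice (or $\EC_\lambda$) is restored. That says nothing about $\omega_1$ or the stationarity of the $S_m$, so Step 3 is lost. The buttons live at $\omega_1$ and the switches at uncountable cofinalities, and a single club configuration cannot tie them together the way your plan needs.

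\textbf{How the paper avoids this.} The paper's proof does not attempt Step 1. It works as follows.
\begin{enumerate}
\item It takes Karagila's model $M$ with $(\kappa,\mu)=(\omega_1,\omega)$ as it stands, as a reachable world in which the buttons are still unpushed (Proposition~\ref{not-collapsed}, via Lemmas~\ref{4.2} and~\ref{4.3}).
\item Proposition~\ref{collapsed} shows that any forcing over $M$ restoring choice collapses $\omega_1$ and pushes every $b_m$.
\item It then rests on the assertion that the switches can only be switched by restoring choice.
\end{enumerate}
Your Step 2 already makes that same assumption: ``independence demands that some such move switches $s_n$ back on'' treats the switching move as a choice-restoring forcing. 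So if you replace Step 1 by simply passing to Karagila's model, your Steps 2 and 3 are exactly the paper's argument.

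\textbf{What your sharper mechanism actually proves.} You were reaching for an obstruction that persists in every outer model, with no appeal to restoring full choice. That does work, but for a different switch. If $N\supseteq M$ has a choice function for $\langle\mathcal{K}_\alpha\mid\alpha<\omega\rangle$, its values are countably many clubs with empty intersection, so $\omega_1^M$ has countable cofinality in $N$. Hence countable choice cannot be switched back on from $M$ without pushing every button. That refutes independence for a system whose switches include countable choice, not for $(\star)$ as literally defined.
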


\begin{proof}
    By Theorem \ref{not-fodor}, there exists a symmetric extension $M$ in which the $(\omega, \omega_1)$-Fodor property fails, and by Proposition \ref{not-collapsed}, $\omega_1$ has not been collapsed in $M$. Now, by Proposition \ref{collapsed}, any forcing over $M$ which restores choice will collapse $\omega_1$ to $\omega$, pushing every unpushed button. We necessarily need to be able to restore choice to switch the switches, so this set of buttons and switches cannot be independent from each other.
\end{proof}

In fact, any of our standard examples\footnote{Over a ground model of $\ZFC + \GCH$ below $\aleph_\omega + \aleph_n = \aleph_n^L$ for all $n<\omega$, let $b_n$ be the statement $\aleph_n^L$ is a cardinal and the $L$-least $\aleph^L_n$-Suslin tree $T^L_n$ in $L$ is still $\aleph^L_n$-Suslin. A proof that these are independent buttons, and another similar example of independent buttons are both found in \cite{Friedman-multiverse}. } of an independent system of buttons, which are based on a property of regular cardinals cannot be independent from a system of choice-switches. 

\begin{prop}\label{all-buttons:prop}
    For a button $b$, which states a combinatorial property of a regular cardinal holds, let $V_\alpha$ be the initial segment of the universe which determines whether the button has been pushed. There exists $\kappa > |V_\alpha|^{V[G]}$, which is regular in $V$, and there exists a symmetric system $\mathscr{S} := \langle\p, \mathscr{G}, \mathscr{F}\rangle$ in $V$, such that over $V_\mathscr{S}$, no new sets have been added to $V_\alpha$, and the $(\kappa,\omega)$-Fodor property fails. 
\end{prop}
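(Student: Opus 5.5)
The plan is to choose $\kappa$ large and regular, and then run Karagila's construction from Section 3 with $\mu=\omega$ at $\kappa$. Because the construction is highly closed, it will not touch $V_\alpha$.

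\textbf{Choosing $\kappa$.} Work in the current model $V[G]$, which satisfies $\ZFC$. Let $\kappa$ be a regular cardinal strictly above $|V_\alpha|^{V[G]}$; the successor $(|V_\alpha|^{V[G]})^{+}$ is regular by choice. If we want $\kappa$ to be a limit cardinal, as assumed in Section 3, we can instead take the next inaccessible if one exists. Otherwise we note that Karagila's construction, Theorem \ref{not-fodor}, does not actually require limitness. Since the statement says ``regular in $V$'', we should check that the ground over which we build is the model in which $\kappa$ is chosen. I would simply rename the current $\ZFC$ model $V$ and observe that regularity is what the construction needs.

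\textbf{The symmetric system.} Let $\mathscr{S}=\langle\p,\mathscr{G},\mathscr{F}\rangle$ be Karagila's system from Section 3 with $\mu=\omega$. Here $\p=\Q_0\ast\dot\Q_1$, where $\Q_0=\text{Add}(\kappa,\omega)$ and $\dot\Q_1$ is the finite-support product of $\text{Club}(S_n)$. The group and filter are generated by $\text{fix}(E)$ for finite $E$. Theorem \ref{not-fodor} then gives directly that in $V_\mathscr{S}$ each $S_n$ contains a club while $\bigcap_n S_n=\varnothing$. So the club filter on $\kappa$ is not $\omega_1$-complete, and the $(\kappa,\omega)$-Fodor property fails.

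\textbf{No new elements of $V_\alpha$.} This is the main step. Any $x\in V_\alpha^{V_\mathscr{S}}$ can be coded, via a bijection in $V$ between $V_\alpha$ and $|V_\alpha|$, by a set of ordinals $A\subseteq|V_\alpha|<\kappa$. We argue by induction on rank that it suffices to show no new subsets of $|V_\alpha|$ are added. Let $\dot A\in\HS$ name such a set. By Lemma \ref{4.2}, $\dot A$ is equivalent to an $\HS$ name for $\Q_0\ast\dot\Q_1\restriction n$ for some $n<\omega$. By Lemma \ref{4.3}, that forcing has a $\kappa$-closed dense subset. A $\kappa$-closed forcing adds no new subsets of an ordinal below $\kappa$, so $A\in V$. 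Inducting through the ranks below $\alpha$, we get $V_\alpha^{V_\mathscr{S}}=V_\alpha^{V}$.

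The obstacle I expect is the applicability of Lemma \ref{4.2} and Lemma \ref{4.3} for $\mu=\omega$. With finite supports, the truncated forcing $\Q_0\ast\dot\Q_1\restriction n$ shoots only finitely many clubs. One must check that Karagila's dense $\kappa$-closed subset still exists there, which requires that the complement of $\bigcup_{m<n}S_m$ be fat enough. I would verify this first, following the proof of \cite[Lemma 4.3]{karagila-fodor}. A secondary concern is ensuring that the button, being determined by $V_\alpha$, really has the same truth value in $V_\mathscr{S}$; this follows once $V_\alpha$ is unchanged.
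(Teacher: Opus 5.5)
Your proposal is correct and is essentially the paper's proof. You take Karagila's system with $\mu=\omega$ at a regular $\kappa>|V_\alpha|$, use Lemma \ref{4.2} to replace a hereditarily symmetric name for a set of ordinals by a name for the truncated iteration $\Q_0\ast\dot\Q_1\restriction n$, and use the $\kappa$-closed dense subset of Lemma \ref{4.3} to conclude that small sets of ordinals, and hence (via your rank induction) all of $V_\alpha$, are unchanged. Your version is slightly more careful than the paper's in two places: you invoke distributivity only for the truncated iteration (the full $\p$ is not $\kappa$-distributive, since it forces $\operatorname{cf}(\kappa)=\omega$ in $V[G]$), and you make explicit the coding of elements of $V_\alpha$ by bounded subsets of $\kappa$.
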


\begin{proof}
    Let $\mathscr{S}$ be as defined in the proof of Theorem \ref{not-fodor}, the symmetric system which gives a failure of the $(\kappa,\omega)$-Fodor property. The forcing  $\p$ is $\kappa$-distributive, and  by Lemma \ref{4.2}, for $A \subseteq$ Ord, there is some $\alpha<\omega$, such that $A$ has a symmetric $\Q_0 \ast \dot{\Q}_1 \restriction \alpha$-name. Then if $|A| < \kappa$, it must be that $A \in V$, so no new sets are added to $V_\alpha$. 
\end{proof}

\begin{remark}
    The proof as stated required our ground model $V$ to be a model of $\ZFC$. With a small adjustment to the proof, the same result holds assuming the ground model is a model of $\ZF + \SVC$.
\end{remark}

\begin{cor}
    Any forcing which restores choice will collapse $\kappa$ and press the button.
\end{cor}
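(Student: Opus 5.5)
Work in the symmetric extension $V_\mathscr{S}$ supplied by Proposition \ref{all-buttons:prop}: $\kappa$ is regular in $V$, $\kappa > |V_\alpha|^{V[G]}$, $V_\mathscr{S}$ adds no new sets to $V_\alpha$, and the $(\kappa,\omega)$-Fodor property fails there. The goal is to show that any forcing extension $N$ of $V_\mathscr{S}$ with $N \models \AC$ collapses $\kappa$ and pushes the button $b$. There are three steps.

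\textbf{Step 1: $\kappa$ survives in $V_\mathscr{S}$.} The button must be unpushed there, since $V_\alpha$ is unchanged. To see that $\kappa$ is still a regular cardinal, I would rerun the argument of Proposition \ref{not-collapsed} with $\mu=\omega$. Suppose some $A \subseteq \kappa$ in $V_\mathscr{S}$ codes a cofinal map from a smaller ordinal, or a collapse. By Lemma \ref{4.2}, $A$ has a symmetric name over $\Q_0 \ast \dot{\Q}_1\restriction n$ for some $n<\omega$. By Lemma \ref{4.3}, that forcing has a $\kappa$-closed dense subset, so it adds no new bounded subsets of $\kappa$ and no short cofinal sequences. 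Hence $A \in V$, which contradicts the regularity of $\kappa$ in $V$.

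\textbf{Step 2: choice collapses $\kappa$.} Fix $N = V_\mathscr{S}[H]$ with $N \models \AC$; Theorem \ref{svc-equivalences} says such forcings exist. By Theorem \ref{not-fodor}, $V_\mathscr{S}$ contains stationary sets $S_n$ for $n<\omega$. Each $S_n$ contains a club $C_n$, namely the generic club added by $\Q_{1,n}$, which has support $\{n\}$. Yet $\bigcap_n S_n = \varnothing$. In $N$, countable choice lets us pick the sequence $\langle C_n : n<\omega\rangle$. This is fewer than $\kappa$ clubs with empty intersection. By Proposition \ref{collapse}, $\kappa$ is then not regular in $N$, and because $N \models \ZFC$ and $\kappa$ is a successor-type regular (or simply by choosing $\kappa$ suitably in the construction), $\kappa$ is collapsed.

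\textbf{Step 3: the collapse pushes the button.} This is where I expect the main obstacle, because it rests on the vague hypothesis that $b$ asserts a combinatorial property of a regular cardinal determined by $V_\alpha$. The argument runs as follows. Since $\kappa > |V_\alpha|^{V[G]}$, collapsing $\kappa$ to some $\lambda < \kappa$ makes the relevant cardinal $\nu$ of $b$ have size at most that of $V_\alpha$. I would then use the specific form of the standard buttons: $b$ says that $\nu$ is a cardinal, that some stationary set or Suslin tree of $\nu$ is killed, or that $\omega_1^L$ is collapsed.

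If the forcing collapses everything up to $\kappa$, it in particular collapses $\nu$, which pushes every such button. If instead it collapses $\kappa$ only to some cardinal $\geq \nu$, this falls short. To rule that out, I would first try to choose $\kappa$ as a successor of a cardinal below $|V_\alpha|^+$, and analyse the collapse given by Proposition \ref{collapse} directly. The club sequence has length $\omega$, so $\mathrm{cf}(\kappa)$ becomes $\omega$ in $N$; that alone need not touch $\nu$. The real fix is therefore to pick $\kappa$ as the relevant cardinal of the button itself, or its successor, so that the Fodor failure sits exactly at the cardinal $b$ concerns. Then destroying the regularity of that cardinal is precisely what pushing $b$ means, for example ``$\aleph_n^L$ is a cardinal and $T^L_n$ is Suslin'' fails once $\aleph_n^L$ is no longer a cardinal.
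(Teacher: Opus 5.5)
Your Steps 1 and 2 follow the paper's route. The corollary is read off from Proposition \ref{all-buttons:prop} together with the club argument of Proposition \ref{collapsed}. The paper then passes from ``$\kappa$ is collapsed'' to ``the button is pressed'' without spelling that step out, by analogy with the system $(\star)$, where $\kappa=\omega_1$ and the collapse is to $\omega$. You are right that this inference is not automatic. With $\kappa>|V_\alpha|$, Proposition \ref{collapse} only gives $\mathrm{cf}^N(\kappa)=\omega$, and hence $|\kappa|^N<\kappa$ for successor $\kappa$. That says nothing about the cardinal $\nu$ the button concerns. However, your Step 3 does not close this gap, so the proposal does not prove the statement.

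The concrete problem is that your repair, taking $\kappa=\nu$, is incompatible with your own Step 1. There you conclude that the button is still unpushed in $V_\mathscr{S}$ because no new sets are added to $V_\alpha$, and that relies on $\kappa>|V_\alpha|$. With $\kappa=\nu\in V_\alpha$, the system adds Cohen subsets of $\nu$ and clubs in $\nu$, so $V_\alpha$ changes. You lose the guarantee that the button is unpushed before choice is restored, which is what gives the corollary its force.

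To make this route work you would need a separate, button-specific preservation argument. By Lemmas \ref{4.2} and \ref{4.3}, every set of ordinals in $V_\mathscr{S}$ lies in an extension of $V$ by a forcing with a $\nu$-closed dense subset. You must then check that such forcing preserves the relevant property: stationarity of subsets of $\omega_1$ for the $b_n$, or, via Easton's lemma, the $\nu$-c.c.\ and hence the Suslinity of $T^L_n$. That is essentially the $(\star)$ argument redone for each button. It is not in your proposal, and it does not extend to an arbitrary ``combinatorial property of a regular cardinal''.

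Your alternative $\kappa=\nu^+$ does not help either, since destroying the regularity of $\nu^+$ need not affect $\nu$. Finally, the statement concerns the $\kappa>|V_\alpha|$ of Proposition \ref{all-buttons:prop}, and for that $\kappa$ you explicitly leave the pressing of the button unproved.
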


This immediately gives us our main theorem. 

\begin{main}
    Any independent system of choice-switches is not independent from any known independent system of buttons, that is, buttons which assert some combinatorial property of a regular cardinal holds. 
\end{main}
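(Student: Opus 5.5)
The plan is to argue by contradiction. Suppose we have an independent system consisting of choice-switches $\{s_i\}$ together with buttons $\{b_j\}$, each asserting a combinatorial property of some regular cardinal. Independence means that in every symmetric extension, any button can be pushed and any switch flipped without disturbing the others. So it suffices to find one symmetric extension, reachable from the ground model, in which every way of flipping some choice-switch necessarily pushes a fixed unpushed button $b$. The natural candidate is the extension $V_\mathscr{S}$ supplied by Proposition \ref{all-buttons:prop}.

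The key steps, in order, are as follows. First, fix an unpushed button $b$ and let $V_\alpha$ be the initial segment that determines its status, as in Proposition \ref{all-buttons:prop}. Second, apply that proposition to obtain a regular $\kappa > |V_\alpha|$ and a symmetric system $\mathscr{S}$ with two properties: no new sets are added to $V_\alpha$, so $b$ is still unpushed in $V_\mathscr{S}$, and the $(\kappa,\omega)$-Fodor property fails there. Third, observe that by Definition \ref{choice-switch:def}, flipping a choice-switch requires changing which fragments of choice hold. Since $V_\mathscr{S}\models\SVC$, Theorem \ref{svc-equivalences} lets us first force $\AC$ and then pass to a further symmetric extension in which the switch has the other value. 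Fourth, invoke the Corollary: by the argument of Proposition \ref{collapsed}, forcing $\AC$ over $V_\mathscr{S}$ yields countably many clubs of $\kappa$ with empty intersection, so $\kappa$ is collapsed. Fifth, conclude that $b$ is pushed, since the collapse changes the cardinal structure below $\kappa$, which bounds $V_\alpha$.

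The main obstacle is this last step together with the routing in the third step. A choice-switch might be flipped by a symmetric extension that never passes through a model of $\AC$. In that case we must argue that the resulting model still satisfies $\SVC$, and that any model in which the switch can be flipped back must itself admit choice being restored by forcing. We can then use the modal axiom (.2) and the $\Sf$ structure to reduce to a path that passes through a $\ZFC$ model; but restoring $\AC$ at any later stage still collapses $\kappa$, because the clubs witnessing the failure of Fodor persist.

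The subtler point is that collapsing $\kappa$ genuinely pushes $b$, and does not merely alter it in a way that can later be reversed. Here I would rely on $b$ being a pure button concerning a regular cardinal $\lambda$ with $V_\alpha$ bounding its witnesses. Collapsing $\kappa > |V_\alpha|$ collapses $\lambda$, or destroys its regularity, which pushes $b$ in the known examples: non-stationarity of $S_n$, Suslin trees, and the $\omega_1^L$-type buttons. This is the step where the restriction to ``known'' buttons is essential, and I would verify it case by case.
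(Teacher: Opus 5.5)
Your overall route is the paper's. You use Proposition~\ref{all-buttons:prop}, then the Corollary that restoring choice collapses $\kappa$ and presses the button, together with the claim that a choice-switch can only be switched by restoring choice. The genuine gap is your fifth step, and the reason you give for it is false.

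All that Proposition~\ref{collapse} yields is that $\kappa$ has countable cofinality in any model of $\AC$ extending $V_\mathscr{S}$. That says nothing about $\lambda$ or about $V_\alpha$. Collapsing or singularizing a cardinal does not collapse smaller ones or add subsets of them:
\begin{itemize}
\item over a model of $\ZFC$, with $\nu=|V_\alpha|$ and $\kappa>\nu^+$, $\mathrm{Col}(\nu^+,\kappa)$ collapses $\kappa$ but is $\nu^+$-closed, so it adds nothing to $V_\alpha$;
\item Prikry forcing gives a measurable $\kappa$ countable cofinality without adding any bounded subset of $\kappa$.
\end{itemize}
So \emph{collapsing $\kappa>|V_\alpha|$ collapses $\lambda$ or destroys its regularity} is not a valid inference, and \emph{verify case by case} does not replace it. What you would actually have to prove is that every forcing over $V_\mathscr{S}$ that restores $\AC$ adds a new element of $V_\alpha$ which pushes $b$. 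There is a tension here: you chose $\kappa$ above $|V_\alpha|$ precisely so that the Karagila system leaves $V_\alpha$ alone, and that same separation is what blocks the inference.

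The paper states this step as a bare Corollary, so following it does not close the gap. The only place the mechanism is visibly at work is the system $(\star)$. There $\kappa=\omega_1$ is the very cardinal the buttons mention, and collapsing $\omega_1$ turns every $S_n$ into a bounded, hence non-stationary, subset of the new $\omega_1$. A sound version should follow that pattern: the cardinal singularized by restoring choice must be one that $b$ speaks about. You would then need to check directly that $V_\mathscr{S}$ does not already push $b$, since the \emph{$V_\alpha$ is untouched} argument is no longer available.

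Your appeal to (.2) in the routing step also does not do what you need. Axiom (.2) expresses directedness of the accessibility relation. It does not say that a world where a choice-switch $s$ takes its other value must be reached through a model of $\AC$. By Definition~\ref{choice-switch:def}, flipping $s$ only requires a change in which fragments of choice hold. Nothing there prevents both values of $s$ from being realized in choiceless models that keep the failure of the $(\kappa,\omega)$-Fodor property and leave $V_\alpha$ alone.

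Your observation that the clubs persist is correct: restoring choice at any later stage still singularizes $\kappa$. But it only bites if restoring choice is forced on you somewhere. That is clear for $s=$ \emph{$\AC$ holds}, or for switches one of whose sides implies enough choice. Otherwise it has to be proved for the specific system rather than extracted from $\Sf$. The paper asserts this point in the same form (\emph{we necessarily need to be able to restore choice to switch the switches}), so this too is something you would need to supply yourself.
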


This reduces Question 1 to the following: 

\begin{question}
    Is there an independent system of buttons which is itself independent from any system of choice-switches? 
\end{question}

Even if our switches do not directly affect choice, it is not clear how inevitable choice is in our constructions. We could use a standard example of an independent set of switches, $\GCH$ holding at the $n$th regular, in a choiceless model. However, if for some initial segment of the universe $V_\alpha$, which is a model of $\ZF$, if $V_\alpha \models \GCH$, then $V_\alpha \models \AC$. It is unclear whether we have examples of switches which do not necessitate the universe being well-orderable in this way.  

\begin{question}
    Is there an independent set of switches $\{s_n \mid n < \omega\}$ such that for every $n$, both $s_n$ and $\neg s_n$ can hold in a model with a non trivial failure of choice? (i.e. the failure of choice is not far above any witness to $s_n$ or $\neg s_n$).
\end{question}

The modal logic of symmetric extensions has thus far been considered in the context of taking symmetric extensions over a model of $\ZFC$, or equivalently, considering models of $\ZF + \SVC$. We can, however, take symmetric extensions over a model of $\ZF$, or specifically over a model of $\ZF + \neg\SVC$. Such models may be defined using a class sized symmetric system. Notable examples of models of $\ZF + \neg \SVC$ include the Gitik model \cite{Gitik}, the Bristol model \cite{karagila-bristol}, the Morris model \cite{karagila-morris}, and `intermediate models with deep failure of choice' \cite{hayut2024intermediatemodelsdeepfailure}. Goldberg also observed that the existence of a Reinhardt cardinal in a model implies that model is not a model of $\SVC$ \cite{Goldberg-Overflow}. It seems likely that the multiverse of symmetric extensions would have a similar structure in this context, but it is not clear whether we know of an independent system of buttons and switches over a model of $\ZF + \neg\SVC$.

\begin{question}\label{notsvc:question}
    What is the modal logic of forcing, when the core of our multiverse is a model of $\ZF + \neg\SVC$? What is the modal logic of symmetric extensions, when the core of our multiverse is a model of $\ZF + \neg\SVC$?
\end{question}

Hamkins and Löwe \cite{Hamkins-modal-logic} studied and asked questions on several topics closely related to the modal logic of forcing. These included the modal logic of forcing over a fixed model of set theory, the modal logic of forcing with parameters, and the modal logic of forcing when we restrict to a particular class of forcing notions. Another further direction for research could be seeing which of their proofs easily generalise to the symmetric extensions case, and asking their open questions in the context of symmetric extensions. 

\section*{Appendix: defining extendible choice}

We will define a weak choice principle, known as extendible choice, that is closely related to the notions of Hartogs and Lindenbaum numbers. 

\begin{definition}
    For a set $X$, we refer to the least ordinal $\alpha$ such that $X$ admits no injection $\alpha \rightarrow M$ as the Hartogs number of $X$, denoted $\aleph(X)$. 
\end{definition}

It easily follows that $\aleph(X)$ must be a cardinal, and that in a model of $\AC$, $\aleph(X) = |X|^+$. In models without choice, it can be useful to think of the \emph{Hartogs number} of a set as measuring `how well-orderable' the set is. 

\begin{definition}
    For a set $X$, its \emph{Lindenbaum number} is the least ordinal $\alpha$ such that there is not a surjection from $|X|$ to $|\alpha|$, denoted $\aleph^*(X)$. 
\end{definition}

It is straightforward to show that $\aleph(X) \leq \aleph^*(X)$, and that under the axiom of choice, $\aleph = \aleph^*$, and in fact the statement $\aleph = \aleph^*$ is equivalent to the axiom of choice for well-ordered families. In models without choice, if for some set $X$, $\aleph(X) < \aleph^*(X)$, we say that $X$ is an \emph{eccentric set}. By using symmetric extensions, we have a lot of control over the possible Hartog--Lindendaum spectrum, (see \cite{karagila-ryan-smith} and \cite{ryan-smith-hartogs} for details).

\begin{definition}
    For a given model $M$ of $\ZF$, the \emph{Hartogs--Lindenbaum spectrum} of $M$, denoted Spec$_\aleph (M)$ is the class \[\text{Spec}_\aleph (M) := \{ \langle \lambda, \kappa \rangle \mid (\exists X) \aleph(X) = \lambda, \aleph^*(X) = \kappa \}.\]
\end{definition}

There is a necessary core to the spectrum in every model of $\ZF$, made up of the $\{\langle \lambda^+, \lambda^+ \rangle \mid \lambda \in \text{Card} \}$, since $\aleph(\lambda) = \aleph^*(\lambda) = \lambda^+$ for all cardinals $\lambda$.

Using this fine control of the existence of eccentric sets, we can witness very particular failures of choice. Extendible choice was first defined by Levy \cite{Levy}, but we will use a more modern presentation by Ryan-Smith \cite{RS25eccentricity}. 

\begin{definition}\label{EC}
    We say $\EC_\alpha$ holds if when $X = \{A_\beta \mid \beta < \alpha \}$ has choice functions for all `initial segments' $\{A_\beta \mid \beta < \lambda \}$, then $X$ has a choice function.
\end{definition}

The following equivalences of $\EC_\alpha$ are shown in \cite{RS25eccentricity}, using previous results of Levy.

\begin{prop}
    Let $\kappa$ be an infinite ordinal and $\mathcal{C}$ be the class of cardinals $\mu$ with cf$(\mu) =$cf$(\kappa)$. The following are equivalent: 
    \begin{enumerate}
        \item $\EC_\kappa$
        \item for all $\mu \in \mathcal{C}, \EC_\mu$
        \item for some $\mu \in \mathcal{C}, \EC_\mu$
        \item there is a limit $\mu \in \mathcal{C}$ and a set $X$ such that $\aleph(X) = \mu$
    \end{enumerate}
\end{prop}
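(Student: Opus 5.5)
The clause numbers refer to the proposition immediately above. Before starting, one point about the orientation of clause (4). Under $\AC$ every Hartogs number is a successor cardinal, and $\EC_\kappa$ holds trivially. So (4) cannot literally be equivalent to (1)--(3). What the argument will actually show is that (1)--(3) are equivalent to each other, and that each of them \emph{fails} exactly when (4) holds, i.e.\ exactly when some set $X$ has a limit Hartogs number $\aleph(X)=\mu$ with $\text{cf}(\mu)=\text{cf}(\kappa)$. I will prove it in that form, writing $\lambda=\text{cf}(\kappa)$ throughout.

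\emph{Step 1: $\EC_\alpha$ depends only on $\text{cf}(\alpha)$.} With this lemma, (1), (2) and (3) all become equivalent to $\EC_\lambda$. Fix a strictly increasing cofinal map $f:\text{cf}(\alpha)\to\alpha$.

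For $\EC_{\text{cf}(\alpha)}\Rightarrow\EC_\alpha$, take a family $\{A_\beta\mid\beta<\alpha\}$ all of whose initial segments have choice functions. Let $B_\xi$ be the nonempty set of choice functions on $\{A_\beta\mid\beta<f(\xi)\}$. A single choice function on the segment below $f(\eta)$ restricts to give choices for every $B_\xi$ with $\xi<\eta$, so the initial segments of $\{B_\xi\}$ have choice functions. Hence there is $g\in\prod_\xi B_\xi$. Define a choice function on $\{A_\beta\}$ by sending $\beta$ to $g(\xi_\beta)(\beta)$, where $\xi_\beta$ is least with $\beta<f(\xi_\beta)$.

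For the converse, given $\{A_\xi\mid\xi<\text{cf}(\alpha)\}$, place $A_\xi$ at coordinate $f(\xi)$ and put $\{0\}$ at every other coordinate below $\alpha$. Initial segments of the padded family still have choice functions, so $\EC_\alpha$ yields a choice function, which we restrict back to the coordinates $f(\xi)$.

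\emph{Step 2: $\neg\EC_\lambda$ produces a limit Hartogs number of cofinality $\lambda$.} Let $\{A_\xi\mid\xi<\lambda\}$ witness the failure, and let $C_\eta\neq\varnothing$ be the set of choice functions on the segment below $\eta$. Choose, by recursion on ordinals (no choice is needed), an increasing sequence of cardinals $\nu_\eta$ for $\eta<\lambda$. Each $\nu_{\eta+1}$ should exceed $\aleph\bigl(\bigsqcup_{\zeta\le\eta}C_\zeta\times\nu_\zeta\bigr)$, and each $\nu_0$ should exceed $\aleph(\bigcup_\eta C_\eta)$. Put $\mu=\sup_\eta\nu_\eta$, which has cofinality $\lambda$, and $X=\bigsqcup_{\eta<\lambda}C_\eta\times\nu_\eta$.

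Each $\nu_\eta$ injects into $X$, so $\aleph(X)\ge\mu$. Suppose $h:\mu\to X$ were injective. By the choice of the $\nu$'s, $h$ cannot land inside $\bigsqcup_{\zeta\le\eta}C_\zeta\times\nu_\zeta$ for any $\eta<\lambda$, so its range meets components of arbitrarily large index. For each $\xi<\lambda$, let $\gamma_\xi$ be least such that $h(\gamma_\xi)$ lies in a component $C_\eta\times\nu_\eta$ with $\eta>\xi$. Then $\xi\mapsto(\text{first coordinate of }h(\gamma_\xi))(\xi)$ is a choice function on the whole family, a contradiction. Hence $\aleph(X)=\mu$.

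\emph{Step 3: a limit Hartogs number of cofinality $\lambda$ gives $\neg\EC_\lambda$.} This is the main obstacle. Suppose $\aleph(X)=\mu$ is a limit with cofinal increasing cardinals $\nu_\xi$ for $\xi<\lambda$, and let $A_\xi$ be the set of injections $\nu_{\xi+1}\to X$. Initial segments have choice functions: a single injection of $\nu_{\eta}$ restricts to all the earlier coordinates. If $\EC_\lambda$ held, we would get injections $e_\xi:\nu_{\xi+1}\to X$ for all $\xi<\lambda$ simultaneously.

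The difficulty is gluing these into an injection $\mu\to X$, since their ranges may overlap. I would handle this as follows. The set $U_\xi=\bigcup_{\zeta<\xi}\text{ran}(e_\zeta)$ is canonically well-ordered, by ordering lexicographically via the index $\zeta$ and the preimage under $e_\zeta$. It has size at most $|\xi|\cdot\sup_{\zeta<\xi}\nu_{\zeta+1}$, and I need this to be $<\nu_{\xi+1}$; this can be arranged by thinning the sequence $\nu$ (e.g.\ taking $\nu_{\xi+1}$ a successor cardinal above $\lambda$ and above the earlier ones). Then $\text{ran}(e_\xi)\setminus U_\xi$ is well-ordered of order type at least $\nu_{\xi+1}$. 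Concatenating these blocks in order gives a well-ordered subset of $X$ of type $\mu$, contradicting $\aleph(X)=\mu$. Hence $\EC_\lambda$ fails. Combined with Step 1, this completes the equivalences in the form described at the start.
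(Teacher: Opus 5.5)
Your correction of clause (4) is right. In any model of $\AC$, $\EC_\kappa$ holds while every Hartogs number is a successor, so the proposition as printed is false. The intended version says that (1)--(3) are equivalent to each other and to the negation of (4). This is the version consistent with Theorem~\ref{extendible:theorem}: in that model a limit cardinal is a Hartogs number exactly when its cofinality is in $\mathcal{C}$, which is exactly where $\EC$ fails.

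The paper gives no proof of its own, only the citation to Ryan-Smith (building on Levy), so there is no route of the paper's to compare against. Your argument is a self-contained $\ZF$ proof of the corrected statement. Steps 1 and 2 are correct as written; the requirement on $\nu_0$ in Step 2 is never used, but it does no harm.

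Step 3 needs a small repair. Clause (4) allows $\mu$ to be regular, i.e.\ $\mu=\lambda$, for instance $\mu=\omega$ or $\mu$ weakly inaccessible. In that case no $\nu_{\xi+1}<\mu$ lies above $\lambda$, so the thinning you propose is unavailable. For $\mu=\omega$ the $\nu$'s are finite, so the claim that $\text{ran}(e_\xi)\setminus U_\xi$ has order type at least $\nu_{\xi+1}$ is also false. Requiring $\nu_{\xi+1}>|\xi|\cdot\sup_{\zeta<\xi}\nu_{\zeta+1}$ instead would fix the infinite cases.

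In fact both the thinning and the disjointification are unnecessary. Let $W=\bigcup_{\xi<\lambda}\text{ran}(e_\xi)$. It is well-orderable via $x\mapsto(\xi_x,e_{\xi_x}^{-1}(x))$, where $\xi_x$ is least with $x\in\text{ran}(e_{\xi_x})$. It contains a subset of size $\nu_{\xi+1}$ for every $\xi<\lambda$, so its cardinality is at least $\mu$. Hence $\mu$ injects into $X$, contradicting $\aleph(X)=\mu$. This handles $\mu=\lambda$ and $\mu=\omega$ uniformly, so the overlap of the ranges, which you flagged as the main obstacle, is not really one.
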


We also have a characterisation of exactly where $\EC$ can hold. 

\begin{prop}\label{ec-at-sucessors}
    For all ordinals $\alpha$ and $\beta$, 
    \begin{enumerate}
        \item $\EC_0$
        \item $\EC_{\alpha + 1}$
        \item if cf$(\alpha) =$cf$(\beta)$ then $EC_\alpha = EC_\beta$. 
    \end{enumerate}
\end{prop}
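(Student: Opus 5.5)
The statement to prove is Proposition \ref{ec-at-sucessors}: $\EC_0$ holds, $\EC_{\alpha+1}$ holds for every $\alpha$, and $\EC_\alpha$ and $\EC_\beta$ are equivalent whenever $\mathrm{cf}(\alpha)=\mathrm{cf}(\beta)$. I will work directly from Definition \ref{EC}, reading the hypothesis as ``every proper initial segment $\{A_\gamma \mid \gamma<\lambda\}$ with $\lambda<\alpha$ has a choice function''. For $\EC_0$ the family is empty, so the empty function is a choice function and nothing needs to be checked.

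For successors, take $X=\{A_\gamma \mid \gamma<\alpha+1\}$ and apply the hypothesis with $\lambda=\alpha$. This gives a choice function $f$ on $\{A_\gamma\mid \gamma<\alpha\}$. The set $A_\alpha$ is nonempty, since otherwise no choice function exists for any family containing it, and the case of an empty member is vacuous. So we may pick a single $a\in A_\alpha$; this is one choice and is available in $\ZF$. Then $f\cup\{\langle\alpha,a\rangle\}$ is a choice function on $X$. Only finitely many choices are made, so no choice principle is needed.

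For the cofinality clause, I will first reduce both $\EC_\alpha$ and $\EC_\beta$ to $\EC_{\mathrm{cf}(\alpha)}$. It then suffices to show $\EC_\alpha\leftrightarrow\EC_{\mathrm{cf}(\alpha)}$ for every limit $\alpha$; successor and zero cases are already settled above, and there both sides are true. Fix a strictly increasing cofinal map $c:\delta\to\alpha$ with $\delta=\mathrm{cf}(\alpha)$, and set $c(0)=0$.

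For $\EC_\delta\Rightarrow\EC_\alpha$, given $\{A_\gamma\mid\gamma<\alpha\}$ satisfying the hypothesis, form the blocks $B_\xi=\prod_{c(\xi)\le\gamma<c(\xi+1)}A_\gamma$ for $\xi<\delta$, handling limit $\xi$ by taking $c$ continuous or by using the half-open intervals determined by $\sup$. Each initial segment $\{B_\xi\mid\xi<\eta\}$ with $\eta<\delta$ has a choice function, obtained by restricting a choice function for $\{A_\gamma\mid\gamma<c(\eta)\}$, which exists by hypothesis. So $\EC_\delta$ yields a choice function on the $B_\xi$, and concatenating these gives a choice function on $X$.

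For the converse, given $\{B_\xi\mid\xi<\delta\}$, define $A_\gamma=B_\xi$ for the unique $\xi$ with $c(\xi)\le\gamma<c(\xi+1)$. A choice function for an initial segment of the $A$'s below $c(\eta)$ restricts, by reading off the value at $c(\xi)$, to one for the $B$'s below $\eta$. Conversely, a choice function for $B$'s below $\eta$ produces one for $A$'s below any $\lambda\le c(\eta)$. Since $c$ is cofinal, every $\lambda<\alpha$ lies below some $c(\eta)$, so the hypothesis transfers. Then $\EC_\alpha$ gives a choice function on the $A$'s, and restricting it to the points $c(\xi)$ gives one on the $B$'s.

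The main obstacle is bookkeeping at limit stages of $c$: the intervals must partition $\alpha$ exactly, which is why I take $c$ continuous, or else define the blocks by $[\sup_{\zeta<\xi}c(\zeta),\,c(\xi))$. I also need to check that no choice is hidden in selecting $c$; this is fine because $c$ is definable as the least-enumeration of a fixed cofinal subset of $\alpha$.
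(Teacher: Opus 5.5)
The paper does not prove this proposition; it states it without argument, next to equivalences quoted from \cite{RS25eccentricity}. Its item (3) is essentially contained in the preceding quoted proposition, which ties $\EC_\kappa$ to $\EC_\mu$ for cardinals $\mu$ of the same cofinality and to Hartogs numbers.

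Your proposal is a correct, self-contained $\ZF$ proof taken directly from Definition~\ref{EC}. Items (1) and (2) are immediate. For (3), regrouping along a continuous strictly increasing cofinal map $c\colon\mathrm{cf}(\alpha)\to\alpha$ with $c(0)=0$ works in both directions, because:
\begin{itemize}
\item the intervals $[c(\xi),c(\xi+1))$ partition $\alpha$;
\item initial segments of length $\eta<\mathrm{cf}(\alpha)$ correspond to initial segments of length $c(\eta)<\alpha$;
\item each block contains the point $c(\xi)$ that you read off.
\end{itemize}
Your route gains elementarity and makes it visible that no choice is used. The Hartogs-number route in the cited work is what the appendix needs for the spectrum results, but it is more than (3) requires.

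Two small corrections are needed.

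\textbf{Nonemptiness in (2).} Nonemptiness of $A_\alpha$ does not follow from vacuity. If $A_\alpha=\varnothing$, the hypothesis of $\EC_{\alpha+1}$ can still hold while the conclusion fails. For example, take $\alpha=0$: the only proper initial segment is empty and has the empty choice function. So you must read the definition as ranging over families of nonempty sets, which is the standard formulation. An empty member below the top does make the hypothesis fail, but the top member $A_\alpha$ of a successor-length family is never tested by the hypothesis.

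\textbf{Keep the continuous $c$.} Such a $c$ exists in $\ZF$: close a cofinal sequence of length $\mathrm{cf}(\alpha)$ under suprema at limit stages. These suprema stay below $\alpha$ by minimality of $\mathrm{cf}(\alpha)$. Your fallback intervals $[\sup_{\zeta<\xi}c(\zeta),c(\xi))$ can be empty, for instance $[0,c(0))$ when $c(0)=0$, or any limit $\xi$ at which $c$ happens to be continuous. In that case the corresponding $B_\xi$ never appears among the $A_\gamma$, so it cannot be read off in the converse direction.
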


 That is, $EC_\alpha$ must hold when $\alpha$ is $0$ or a successor ordinal, and may or may not hold at limit ordinals. 

\begin{theorem} [Theorem 4.10 in \cite{RS25eccentricity}.]\label{extendible:theorem}
     Assume $\GCH$ and let $\mathcal{C}$ be a class of infinite regular cardinals such that whenever $\kappa$ is inaccessible and $\kappa \cap \mathcal{C}$ is unbounded below $\kappa$, we have $\kappa \in \mathcal{C}$. Then there is a symmetric extension $M$ such that \[\text{Spec}_\aleph(M) = \{ \langle \kappa^+, \kappa^+ \rangle \mid \kappa \in \text{Card} \} \cup \{\langle \kappa, \kappa^+ \rangle \mid \text{cf}(\kappa) \in \mathcal{C} \}.\]

     In particular, $M \models (\forall \kappa) \EC_\kappa \leftrightarrow \text{cf}(\kappa) \notin \mathcal{C}$.
 \end{theorem}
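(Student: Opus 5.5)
The statement immediately above is Theorem \ref{extendible:theorem} (Ryan-Smith's Theorem 4.10). I would prove it by building $M$ as a class-length symmetric extension of a $\GCH$ ground model. The extension is a product (or Easton-support iteration) of symmetric systems, one for each $\kappa\in\mathcal{C}$. The component at $\kappa$ adds an eccentric set $X_\kappa$ with $\aleph(X_\kappa)=\kappa$ and $\aleph^*(X_\kappa)=\kappa^+$.

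For a single regular $\kappa$, I would take the following system.
\begin{itemize}
\item The forcing adds $\kappa$-many generic subsets, arranged in blocks indexed by $\alpha<\kappa$, using $\text{Add}(\kappa^{+},1)$-style or $\text{Add}(\alpha^{+},\cdot)$-style pieces.
\item The group permutes within each block.
\item The filter is generated by fixing boundedly many blocks, that is, $<\kappa$ support below $\kappa$.
\end{itemize}
Let $X_\kappa$ be the set of generic objects modulo this permutation. Then, for each $\alpha<\kappa$, the bounded parts of $X_\kappa$ are well-orderable, because their names have symmetric supports. This gives $\aleph(X_\kappa)\ge\kappa$. A standard permutation argument shows that no injection of $\kappa$ into $X_\kappa$ has a symmetric name, since such a name would need unbounded support. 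This gives $\aleph(X_\kappa)=\kappa$. On the other hand, the map sending an element to the index of its block is symmetric, so $X_\kappa$ surjects onto $\kappa$. This gives $\aleph^*(X_\kappa)=\kappa^+$.

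Taking the product over $\kappa\in\mathcal{C}$ with Easton support, I would then check two things. First, $\GCH$ together with the closure properties of the factors means that cardinals and cofinalities are preserved, via the usual Easton factorisation at each $\lambda$. Second, the spectrum contains exactly the core pairs together with $\langle\mu,\mu^+\rangle$ for $\text{cf}(\mu)\in\mathcal{C}$. The singular cases with $\text{cf}(\mu)\in\mathcal{C}$ come from taking suitable unions or products of the $X_\kappa$ along a cofinal sequence.

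The main obstacle is the reverse inclusion: showing that no other Hartogs–Lindenbaum pairs appear. For this I would prove an approximation lemma in the spirit of Lemma \ref{4.2}. Every set in $M$ is, up to a well-orderable piece, a surjective image of a product of boundedly many $X_\kappa$'s together with an ordinal. Its Hartogs number is then either a successor, or a limit whose cofinality equals some $\kappa$ at which the support is unbounded.

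The hypothesis on inaccessibles is used exactly here. If $\kappa$ is inaccessible and $\mathcal{C}\cap\kappa$ is unbounded, then Easton support produces a set whose Hartogs number is a limit of cofinality $\kappa$. This happens automatically, so $\kappa$ must already belong to $\mathcal{C}$ for the stated spectrum to be correct. The final clause, that $\EC_\kappa$ holds exactly when $\text{cf}(\kappa)\notin\mathcal{C}$, then follows from the Levy/Ryan-Smith equivalence between $\EC$ and the existence of sets with limit Hartogs number of the given cofinality.
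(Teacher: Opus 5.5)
The paper does not prove this theorem. It is quoted from Ryan-Smith, so there is no in-paper argument to compare with, and your sketch has to stand on its own.

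\textbf{The forward inclusion.} Your single-$\kappa$ component is the standard Levy/Jech block model for $\neg\EC_\kappa$. Your explanation of the inaccessible clause is also right: bounded Easton support below an inaccessible limit of $\mathcal{C}$ makes $\bigsqcup_{\mu\in\mathcal{C}\cap\kappa}X_\mu$ eccentric at $\kappa$. Some routine points are missing:
\begin{itemize}
\item The blocks must have size $<\kappa$.
\item $\aleph^*(X_\kappa)\le\kappa^+$ needs an argument. A surjection onto $\kappa^+$ would write $\kappa^+$ as a $\kappa$-union of sets of size $<\kappa$.
\item For singular $\mu$ with $\mathrm{cf}(\mu)=\kappa\in\mathcal{C}$, the witness must be built from $X_\kappa$ padded by ordinals along a cofinal sequence in $\mu$. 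The cardinals along that sequence need not lie in $\mathcal{C}$.
\end{itemize}

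\textbf{The reverse inclusion is the real gap.} Writing each $Y\in M$ as a surjective image of an ordinal times a product of $X_\kappa$'s bounds $\aleph^*(Y)$ from above. It says nothing about $\aleph(Y)$, because Hartogs numbers are not monotone under surjections. So it cannot exclude any of the following: a pair $\langle\lambda,\lambda\rangle$ with $\lambda$ a limit; $\aleph^*(Y)>\aleph(Y)^+$; or eccentricity at a cofinality outside $\mathcal{C}$. What is needed is an amalgamation argument: whenever $Y$ surjects onto a regular $\mu\notin\mathcal{C}$, a single support must see $\mu$ many fibres inside its well-orderable part of $Y$.

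\textbf{That amalgamation fails for your Easton product at successors of singular limits of $\mathcal{C}$.} Take $\mathcal{C}=\{\aleph_n:1\le n<\omega\}$, for which the inaccessible clause is vacuous. The following is a $\ZF$ argument.
\begin{itemize}
\item Assume $\AC_\omega$, which is exactly $\EC_\omega$.
\item For each $n\ge1$, let $\langle A^n_\beta:\beta<\aleph_n\rangle$ witness $\neg\EC_{\aleph_n}$. In your model these are the blocks of $X_{\aleph_n}$.
\item Let $\langle g_i:i<\aleph_{\omega+1}\rangle$ be $<^*$-increasing and cofinal in $\prod_{n\ge1}\aleph_n$. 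Such a scale exists in $V$ by $\GCH$. It survives, because $\sigma$-closed factors such as $\mathrm{Add}(\aleph_n,\cdot)$ add no $\omega$-sequences.
\end{itemize}
Let $B_i$ consist of the pairs $(m,\langle c_n:n\ge m\rangle)$ with $c_n\in\prod_{\beta<g_i(n)}A^n_\beta$.
\begin{itemize}
\item Each $B_i$ is nonempty by $\AC_\omega$.
\item Each initial segment $\langle B_i:i<\delta\rangle$ has a choice function. Restrict a single element of $B_\delta$, using $g_i\le^* g_\delta$.
\item There is no global choice function. Suppose $i\mapsto(m_i,\langle c^i_n\rangle_n)$ were one. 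Then $\gamma(n)=\sup\{g_i(n):m_i\le n\}<\aleph_n$, since otherwise gluing the $c^i_n$ yields a choice function for $\langle A^n_\beta\rangle_{\beta<\aleph_n}$. Hence $g_i\le^*\gamma$ for every $i$, contradicting cofinality.
\end{itemize}

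\textbf{Consequences.} Your $M$ must violate $\EC_\omega$ or $\EC_{\aleph_{\omega+1}}$, while neither $\omega$ nor $\aleph_{\omega+1}$ lies in $\mathcal{C}$. So the final clause fails in your model. Equivalently, by the Levy/Ryan-Smith equivalence, some Hartogs number is a limit of cofinality outside $\mathcal{C}$, which contradicts your asserted dichotomy. The inaccessible clause is therefore not the only closure that this kind of construction forces.

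The argument is in $\ZF$. It therefore shows that, for this $\mathcal{C}$, the conclusion as quoted cannot hold in any symmetric extension of a $\GCH$ model that preserves $\aleph_n$ for $n\le\omega+1$ and keeps the ground-model scale unbounded. Before completing this route, check the exact hypotheses and construction of Ryan-Smith's original Theorem 4.10.
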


\section*{Acknowledgments}

The author would like to thank Andrew Brooke-Taylor, Azul Fatalini and Asaf Karagila for feedback on an early drafts of this paper. The author would also like Fernando Barrera and Benedikt Löwe for useful conversations about modal logic, and Calliope Ryan-Smith for useful comments on Extendible Choice.

\bibliographystyle{plain}
\bibliography{references}

\end{document}